\pgfplotsset{compat=1.17}
\newcommand{\ee}{\mathbb{E}}
\newcommand{\pp}{\mathbb{P}}
\newcommand{\B}{\mathcal{B}}
\newcommand{\PP}{\mathbb{P}}
\newcommand{\F}{\mathbb{F}}
\newcommand{\PG}{\mathrm{PG}}
\newcommand{\eps}{\varepsilon}
\DeclareMathOperator{\Bin}{Bin}
\newtheorem{thm}{Theorem}[section]
\newtheorem{prop}[thm]{Proposition}
\newtheorem{lem}[thm]{Lemma}
\newtheorem{cor}[thm]{Corollary}
\newtheorem{rem}[thm]{Remark}
\theoremstyle{definition}
\newtheorem{mydef}[thm]{Definition}
\title{The generalized trifference problem}
\author{
Anurag Bishnoi\thanks{Delft Institute of Applied Mathematics, TU Delft. E-mail: {\tt A.Bishnoi@tudelft.nl}}
\and
Bart\l omiej Kielak\thanks{Faculty of Informatics, Masaryk University, Brno, Czech Republic. E-mail: {\tt bkielak@alumni.uj.edu.pl}}
\and
Benedek Kovács\thanks{ELTE Linear Hypergraphs Research Group, Eötvös Loránd University, Budapest, Hungary. Supported by the EKÖP-24 University Excellence Scholarship Program of the Ministry for Culture and Innovation from the source of the National Research, Development and Innovation Fund and the University Excellence Fund of Eötvös Loránd University.
		E-mail: {\tt benoke981@gmail.com}}
	\and Zolt\'an L\'or\'ant Nagy\thanks{ELTE Linear Hypergraphs  Research Group,
		E\"otv\"os Lor\'and University, Budapest, Hungary. The author is supported by the University Excellence Fund of Eötvös Loránd University.	E-mail: {\tt nagyzoli@cs.elte.hu}}
\and G\'abor Somlai\thanks{ELTE Algebra and Number Theory Department,
		E\"otv\"os Lor\'and University and HUN-REN Alfr\'ed R\'enyi Institute of Mathematics Budapest, Hungary. Supported by OTKA grants: FK 142993, STARTING 150576, 138596. E-mail: {\tt gabor.somlai@ttk.elte.hu}}
\and M\'at\'e Vizer\thanks{Department of Computer Science and Information Theory, Budapest University of Technology and Economics, Budapest, Hungary.  Supported by the EXCELLENCE-24 project no.~151504 Combinatorics and Geometry of the NRDI Fund.  E-mail:{\tt vizermate@gmail.com}} 
 \and Zeyu Zheng\thanks{Carnegie Mellon University, Pittsburgh, PA 15213, USA. Supported in part by U.S. taxpayers through NSF grant DMS-2154063. E-mail: {\tt zeyuzhen@andrew.cmu.edu}}}
  \date{}
\begin{document}

\maketitle
\begin{abstract}
    We study the problem of finding the largest number $T(n, m)$ of ternary vectors of length $n$ such that for any three distinct vectors there are at least $m$ coordinates where they pairwise differ. 
    For $m = 1$, this is the classical trifference problem which is wide open. 
    We prove upper and lower bounds on $T(n, m)$ for various ranges of the parameter $m$ and determine the phase transition threshold on $m=m(n)$ where $T(n, m)$ jumps from constant to exponential in $n$. 
    By relating the linear version of this problem to a problem on blocking sets in finite geometry, we give explicit constructions and probabilistic lower bounds. 
    We also compute the exact values of this function and its linear variation for small parameters. 
\end{abstract}

\section{Introduction}

Let $\Sigma$ denote a finite set of symbols, known as the alphabet, where we typically take $\Sigma=\{0,1,2\}$ or a finite field $\mathbb{F}_q$. 
We will refer to subsets of $\Sigma^n$ as codes of length $n$ over $\Sigma$. 
If $a,b,c\in \Sigma^n$
are three codewords, then we say that $a$, $b$ and $c$ \textit{triffer} at position $i$ if $\{a_i,b_i,c_i\}=\{0,1,2\}$. The well-known \textit{trifference problem} refers to the determination of the largest size $T(n)$  of a ternary code $C \subseteq \{0, 1, 2\}^n$ in which for every three distinct codewords $a, b, c \in C$, there is at least one position where they triffer. 
This problem has important applications in computer science \cite{km88, WangXing} and cryptography \cite{barg2010robust, zaverucha2010hash}, and is closely related to many seemingly distant topics, such as strong blocking sets in finite projective spaces \cite{Bishnoi_trifference2024}, minimal codes \cite{Bishnoi_trifference2024, ABNR2022}, and covering codes \cite{heger2021short}. Recently, the problem has gained significant interest, in part due to these connections, which has led to
various improvements in the known bounds \cite{Bishnoi_trifference2024, Kurz2024, bhandari2025improved, pohoata2022trifference, della2025bounds}.

The trifference problem is a special case of the following perfect $k$-hashing problem.
Let $k\ge 3$ be a fixed integer. A code $C$ over an alphabet $\Sigma$ is called a
perfect $k$-hash code if for every $k$-tuple of distinct codewords in $C$, there exists a position where all these elements differ.
 Finding optimal upper bounds for the maximum size of perfect $k$-hash codes is a fundamental open problem in theoretical computer
science. 
The cornerstone result is based on
 an  elementary double counting argument due to K\"orner \cite{Korner1973}, and it states that  $|C|\le (k-1)(1+\frac{1}{k-1})^n.$  
 This upper bound was improved by Fredman and Komlós \cite{fredman1984size}, and later, by Arikan \cite{arikan1994upper}.
 In recent years, there have been further improvements by Dalai, Guruswami and Radhakrishnan \cite{dalai2019improved}, and most recently by Guruswami and Riazanov \cite{guruswami2022beating} in the cases when $k>3$. 
 However, improving the trivial upper bound in the case $k=3$ remained a notoriously difficult problem. Very recently, Bhandari and Khetan \cite{bhandari2025improved} obtained the following polynomial improvement on the upper bound, 
 \[T(n)\leq cn^{-2/5}\left(\frac32\right)^n,\] where $c$ is an absolute constant. 
 On the other hand, the best lower bound on $T(n)$ is $(\frac{9}{5})^{n/4}$ due to Körner and Marton \cite{km88} and dates back to 1988 (see \cite{Bishnoi_trifference2024} for an alternate proof of the same bound).

In this paper, we study the following natural generalization of the trifference problem. 
\begin{mydef}
Suppose that $n\ge m\ge 1$ are integers. Let $T(n, m)$ be the largest size of a ternary code $C \subseteq \{0, 1, 2\}^n$ such that for any three distinct codewords $x, y, z$ in $C$, we have at least $m$ coordinates where they triffer. A code with such a property will be referred to as an \textit{$m$-trifferent code}.
\end{mydef}

Recall that the \textit{Hamming distance} $d(u,v)$ between two codewords $u$, $v$ is the number of coordinates where $u$ and $v$ differ. The expression $A_{q}(n,d)$ denotes the maximum number of possible codewords in a $q$-ary block code of length $n$ and minimum Hamming distance $d$, see \cite{macwilliams1977theory}. The determination of  (the asymptotic behavior of)  $A_{q}(n,d)$ is one of the most central problems in coding theory.
 
The determination of the maximum size of generalized trifferent codes 
 $T(n, m)$ can be interpreted as the analogue of maximum size codes with given Hamming distance, where $m$ corresponds to the `minimum distance' in this case, thus highlighting its relevance.
 This generalization has been studied  by Bassalygo,  Burmester, Dyachkov, Kabatianski in \cite{Bassalygo1997HashCodes} under the name of $k$-hash codes with distance $d_k$ over an alphabet of size $q$. 
 Our problem corresponds to the special case of $q = k = 3$ and $d_3 = m$. 

Our main contribution is to give both lower and upper bounds on $T(n,m)$ for various ranges of $m$. 
We first show that there is a phase transition around the value $m\approx \frac29 n$, which is analogous to the phase transition of error-correcting codes.
This was already stated implicitly in \cite{Bassalygo1997HashCodes} for the general problem, but we also include a proof, and we significantly improve their bounds for $q=k=3$.

\begin{thm}\label{main} Suppose that $\eps>0$ is a real number. If $m>(\frac29+\eps)n$, then $T(n,m)$ is bounded above by a constant depending only on $\eps$. On the other hand, if $m<(\frac29-\eps)n$, then $T(n,m)$ is bounded from below by an exponential function $(1+\delta)^n$, where $\delta>0$ depends only on $\eps$.
\end{thm}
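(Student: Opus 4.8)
The plan is to exploit the observation that for three symbols chosen independently and uniformly from $\{0,1,2\}$, the probability that they are pairwise distinct is exactly $3!/3^3 = 2/9$; thus $\frac29 n$ is the expected number of triffering coordinates of a ``typical'' triple, and the two halves of the theorem say that above this threshold no large code can sustain so many triffering coordinates on \emph{every} triple, whereas below it a random code already works. I would prove the two directions separately.

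For the upper bound, suppose $C$ is an $m$-trifferent code with $N = |C|$ codewords and $m > (\frac29 + \eps) n$. I would average the number of triffering coordinates over all $\binom{N}{3}$ triples. Fix a coordinate $i$ and let $n_0, n_1, n_2$ be the number of codewords taking value $0,1,2$ there, so $n_0+n_1+n_2 = N$; the number of triples triffering at $i$ is exactly $n_0 n_1 n_2$, and by AM--GM this is at most $(N/3)^3$. Hence the fraction of triples triffering at coordinate $i$ is at most $\frac{(N/3)^3}{\binom N3} = \frac{2N^2}{9(N-1)(N-2)}$. Summing over the $n$ coordinates, the \emph{average} number of triffering coordinates per triple is at most $\frac{2N^2}{9(N-1)(N-2)}\, n$. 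Since $C$ is $m$-trifferent, every triple --- and hence this average --- is at least $m > (\frac29 + \eps) n$, forcing $\frac{2N^2}{9(N-1)(N-2)} > \frac29 + \eps$. As $\frac{2N^2}{9(N-1)(N-2)} = \frac29\bigl(1 + \tfrac{3N-2}{(N-1)(N-2)}\bigr) \to \frac29$ when $N \to \infty$, this can hold only for $N$ bounded by a constant depending on $\eps$ (indeed $N = O(1/\eps)$), giving the claimed constant upper bound.

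For the lower bound, I would use the first-moment method. Set $N = (1+\delta)^n$ for a small $\delta = \delta(\eps) > 0$ to be fixed, and pick $N$ codewords independently and uniformly at random from $\{0,1,2\}^n$. For a fixed triple of the chosen codewords, the events ``triffer at coordinate $i$'' are independent across the coordinates $i$, each of probability $\frac29$, so the number of triffering coordinates of a fixed triple is distributed as $\Bin(n, \tfrac29)$; a Chernoff bound gives $\pp\bigl(\Bin(n,\tfrac29) < m\bigr) \le e^{-c(\eps) n}$ for $m < (\frac29 - \eps)n$ and some $c(\eps) > 0$. By a union bound, the expected number of ``bad'' triples (those triffering in fewer than $m$ coordinates) is at most $\binom N3 e^{-c(\eps)n} \le (1+\delta)^{3n} e^{-c(\eps)n}$, while the expected number of coinciding pairs is at most $\binom N2 3^{-n} \le (1+\delta)^{2n} 3^{-n}$. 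Choosing $\delta$ so small that $3\ln(1+\delta) < c(\eps)$ and $(1+\delta)^2 < 3$, both expectations tend to $0$, so for large $n$ their sum is below $1$; hence some outcome has no bad triple and no repeated codeword, yielding $N = (1+\delta)^n$ distinct codewords forming an $m$-trifferent code.

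I expect the routine-but-delicate part to be the lower bound, specifically balancing the cubic loss $(1+\delta)^{3n}$ from the union bound over triples against the Chernoff rate $e^{-c(\eps)n}$: one must extract an explicit (or at least positive) dependence of $c(\eps)$ on $\eps$, via the relative-entropy or Hoeffding bound for the binomial tail, and check that a positive $\delta$ can indeed be chosen. The upper bound, by contrast, is a clean averaging argument whose only ingredient is AM--GM; its sole subtlety is the lower-order correction $\frac{N^2}{(N-1)(N-2)}$, which is precisely what makes the resulting bound on $N$ finite rather than vacuous.
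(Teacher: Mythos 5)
Your proof is correct, and the upper-bound half is essentially identical to the paper's: the paper also double counts the quadruples $(x,y,z,i)$ with $c_x,c_y,c_z$ triffering at position $i$, bounds the count at each coordinate by $r_is_it_i\le (T/3)^3$ via AM--GM, and concludes $\frac{T(T-1)(T-2)}{T^3}\lambda\le\frac29$, which forces $T=O_\eps(1)$ — your "averaging over triples" is the same computation divided through by $\binom N3$. The only genuine difference is in the lower bound: you use a pure first-moment/union-bound argument (choose $N=(1+\delta)^n$ random words and show that with positive probability there are \emph{no} bad triples and no collisions), whereas the paper uses the alteration method (take $t$ random words and delete one word from each bad triple, giving $T(n,m)\ge t-\binom t3 p_b$). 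Both rest on the same Chernoff estimate for $\Bin(n,\frac29)$, and both suffice for Theorem~\ref{main}; the alteration version is what the paper actually needs elsewhere, since it yields the quantitatively stronger rate $t\approx p_b^{-1/2}$ of Theorem~\ref{mainlower} rather than the $p_b^{-1/3}$ your union bound gives. A small point in your favor: you explicitly dispose of coinciding codewords via the $\binom N2 3^{-n}$ term, a detail the paper's alteration argument leaves implicit.
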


Before stating our other main results, let us first state some elementary observations on the function in view. 
\begin{prop}\label{trivi} \begin{enumerate}[label=(\roman*)]
    \item\label{labeli} $T(n, n) = 3$ for every $n\ge 1$.
    \item\label{labelii} $T(n, m) \leq T(n - 1, m - 1)$ for every $n\ge m\ge 2$. 
    \item $T(n, m) \geq T(n/m, 1)$ for every $n\ge m\ge 1$ with $m\mid n$. 
    \item $\frac23T(n,m)\leq T(n-1, m)$ for every $n>m\ge 1$.
\end{enumerate}
\end{prop}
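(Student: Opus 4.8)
The plan is to prove each of the four parts of Proposition~\ref{trivi} separately, as they are largely independent elementary facts.

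For part~\ref{labeli}, I would first observe that the three constant codewords $00\cdots0$, $11\cdots1$, $22\cdots2$ form an $n$-trifferent code, since any three distinct codewords here are exactly these three and they triffer in \emph{every} one of the $n$ coordinates; this gives $T(n,n)\ge 3$. For the upper bound $T(n,n)\le 3$, I would argue that four codewords cannot all pairwise triffer in all $n$ coordinates: if we had four distinct codewords $a,b,c,d$, then already requiring $\{a_i,b_i,c_i\}=\{0,1,2\}$ for every $i$ forces each coordinate of $d$ to coincide with one of $a_i,b_i,c_i$, so the triple involving $d$ and two others must fail to triffer in at least one coordinate. More cleanly, in an $n$-trifferent code every triple triffers in all $n$ positions, so fixing any coordinate $i$, the values $x_i$ over the codewords $x$ must be such that every three of them are distinct, forcing at most $3$ codewords total. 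I expect this to be the step needing the most care to phrase precisely.

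For part~\ref{labelii}, the idea is a puncturing/shortening argument. Given an optimal $m$-trifferent code $C\subseteq\{0,1,2\}^n$, I would delete a single coordinate to obtain a code $C'\subseteq\{0,1,2\}^{n-1}$. The key point is that any three distinct codewords of $C$ triffer in at least $m$ coordinates, so after deleting one coordinate they still triffer in at least $m-1$ of the remaining coordinates; moreover deleting a coordinate cannot merge two distinct codewords into one as long as the code remains a set (and even if two codewords agreed after deletion, we would only lose codewords, which still gives the inequality in the right direction once we check $C'$ is $(m-1)$-trifferent). Thus $C'$ is an $(m-1)$-trifferent code of length $n-1$ of the same size, giving $|C|=|C'|\le T(n-1,m-1)$. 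The minor subtlety to address is the possibility of collisions after deletion, which I would handle by noting we may delete any fixed coordinate and that the trifference condition in the surviving coordinates already certifies distinctness.

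For part~(iii), the plan is a tensor/concatenation-type construction. Starting from an optimal $1$-trifferent code $D\subseteq\{0,1,2\}^{n/m}$ of size $T(n/m,1)$, I would build a code of length $n$ by replacing each coordinate of each codeword of $D$ with a block of $m$ identical copies of that symbol (i.e., repeating each coordinate $m$ times). If three distinct codewords of $D$ triffer in at least one coordinate, then in the blown-up code they triffer in all $m$ positions of the corresponding block, so the new code is $m$-trifferent and has the same size, yielding $T(n,m)\ge T(n/m,1)$. For part~(iv), I would use a pigeonhole/averaging argument on the first coordinate: partition the optimal $m$-trifferent code $C$ of length $n$ according to the value in coordinate $1$ into three classes $C_0,C_1,C_2$; the largest class has size at least $|C|/3$, but I actually want a factor $2/3$, so instead I would take the union of the two largest classes, which has size at least $\frac23|C|$, and then delete the first coordinate. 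The deleted code on these two classes remains $m$-trifferent because any three codewords among two symbol-classes never triffer in coordinate $1$ (three values among only two symbols cannot equal $\{0,1,2\}$), so all $m$ trifference positions already lie among the remaining $n-1$ coordinates. Hence $\tfrac23 T(n,m)\le|C'|\le T(n-1,m)$. I expect part~(iv) to be the most delicate, since the clean bound hinges on recognizing that restricting to two symbol-classes forces all trifferences off the first coordinate.
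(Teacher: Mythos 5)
Your proposal is correct and follows essentially the same route as the paper: coordinate deletion for (ii), $m$-fold repetition for (iii), and for (iv) discarding the least frequent symbol class at a fixed coordinate before puncturing, noting that no triple within the two surviving classes can triffer at that coordinate. The only difference is that you spell out the upper bound in (i) and the distinctness-after-puncturing issue in (ii), both of which the paper treats as immediate.
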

\begin{proof} $(i)$ is straightforward. To see $(ii)$, observe that  if we erase a coordinate of an $m$-trifferent code, we obtain an $(m-1)$-trifferent code. To prove $(iii)$, we take the $m$-fold repetition of each codeword in a $1$-trifferent code of length $n/m$, giving an $m$-trifferent code of length $n$. Finally for $(iv)$, if we fix a coordinate $i$ in an $m$-trifferent code and take all the codewords whose $i$-th coordinate is not equal to $j$ for an arbitrarily chosen $j$, we again obtain an $m$-trifferent code. If we then erase the $i$-th coordinate, the resulting code is still $m$-trifferent. Taking $j$ to be the least frequent value appearing at coordinate $i$, we get the desired bound.
\end{proof}

Using the upper bound on $T(n)=T(n,1)$ from \cite{bhandari2025improved}, Proposition \ref{trivi} \ref{labelii} applied $m-1$ times immediately gives the following. 
\begin{cor}
\label{cor:triv_upper}
   $T(n,m)\leq c(n-m+1)^{-2/5}(\frac32)^{n-m}$ holds, where $c$ is an absolute constant.
\end{cor}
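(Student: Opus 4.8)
The plan is to reduce the general bound to the already-known $m=1$ estimate by peeling off coordinates one at a time. Starting from the function $T(n,m)$, I would invoke Proposition \ref{trivi} \ref{labelii}, namely $T(n,m)\le T(n-1,m-1)$, and iterate it. Each application lowers both the length and the required trifference count by one, so after applying the inequality $m-1$ times one obtains
\[
T(n,m)\le T(n-1,m-1)\le T(n-2,m-2)\le\cdots\le T(n-m+1,1).
\]
This collapses the generalized quantity to the classical trifference function evaluated at length $n-m+1$, which is exactly the regime where an external bound is available.

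Next I would substitute the Bhandari--Khetan upper bound $T(k,1)\le c\,k^{-2/5}\left(\frac32\right)^{k}$, valid for the ordinary trifference problem, with $k=n-m+1$. This directly yields
\[
T(n,m)\le c\,(n-m+1)^{-2/5}\left(\frac32\right)^{n-m+1}.
\]
The last cosmetic step is to rewrite $\left(\frac32\right)^{n-m+1}=\frac32\left(\frac32\right)^{n-m}$ and absorb the harmless multiplicative factor $\frac32$ into the absolute constant, which turns the exponent on the base $\frac32$ from $n-m+1$ into the claimed $n-m$ while keeping the constant absolute.

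There is essentially no genuine obstacle here: the whole argument is a direct iteration of the monotonicity relation in Proposition \ref{trivi} \ref{labelii} followed by a single substitution into a cited estimate. The only point requiring a little care is the exponent bookkeeping, ensuring that the iteration terminates precisely at $T(n-m+1,1)$ and that the final power of $\frac32$ is $n-m$ rather than $n-m+1$; both are handled by the trivial constant adjustment above, so I would expect the proof to be a single short displayed chain of inequalities.
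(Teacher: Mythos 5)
Your proposal matches the paper's argument exactly: iterate Proposition \ref{trivi} \ref{labelii} $m-1$ times to reach $T(n-m+1,1)$, then apply the Bhandari--Khetan bound and absorb the leftover factor of $\frac32$ into the absolute constant. This is precisely how the paper derives Corollary \ref{cor:triv_upper}.
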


In \cite{Bassalygo1997HashCodes}, it was shown that the elementary recursive bound can be improved to give 
\begin{equation}
\label{eq:old_bound}
T(n, m) \leq m \left( \frac{3}{2}\right)^{n - \frac{9}{2}(m -1)}.
\end{equation}
We provide significant improvements on these upper bounds in various ranges of values for $m$. 

\begin{thm}[Upper bound] \label{mainupper}Let $m$ be of the form $m=3\ell+r$ for $r\in \{0,1,2\}$. Then  \[
    T(n,m)\le \begin{cases}
\left(\frac{3}{2}\right)^n\frac{6(3\ell+2)2^{2\ell+1}}{\binom{n}{2\ell+1}} &\text{if $r =2 $} \\[10pt]
\left(\frac{3}{2}\right)^n\frac{3(3\ell+1)2^{2\ell}}{\binom{n}{2\ell}} &\text{if $r=1$} \\[10pt]
\left(\frac{3}{2}\right)^n\frac{6\ell2^{2\ell-1}}{\binom{n}{2\ell-1}} &\text{if $r=0$.}\end{cases} 
    \]
\end{thm}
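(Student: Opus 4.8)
The plan is to reduce the whole statement to one clean double-counting step that automatically produces the two factors $(3/2)^n$ and $\binom{n}{s}^{-1}$, leaving only a local extremal question. Write $s$ for the binomial index in the relevant case, i.e.\ $s=2\ell+1,\,2\ell,\,2\ell-1$ for $r=2,1,0$. For a reference word $\sigma\in\{0,1,2\}^n$ and $x\in C$ call a coordinate $i$ an \emph{agreement} if $x_i=\sigma_i$. I would count the pairs $(x,\sigma)$ with $x\in C$ that have exactly $s$ agreements. On one hand, for each fixed $x$ there are exactly $\binom{n}{s}2^{\,n-s}$ such $\sigma$ (choose the $s$ agreeing coordinates, then one of the two disagreeing symbols on each of the remaining $n-s$ coordinates), so the count is $|C|\binom{n}{s}2^{\,n-s}$. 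On the other hand I would fix $\sigma$ and bound the number of $x\in C$ agreeing with it in exactly $s$ places by a constant $c_m$; summing over the $3^n$ choices of $\sigma$ gives at most $3^n c_m$. Comparing,
\[
|C|\le \frac{3^n c_m}{\binom{n}{s}2^{\,n-s}}=\left(\tfrac32\right)^n\frac{c_m\,2^{s}}{\binom{n}{s}},
\]
which is exactly the claimed bound once $c_m$ is identified with $6(3\ell+2)$, $3(3\ell+1)$, or $6\ell$ in the three cases.

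Everything thus hinges on the key lemma: for a fixed $\sigma$, an $m$-trifferent code has at most $c_m$ words agreeing with $\sigma$ in exactly $s$ coordinates. I would first record the local structure. If $a$ agrees with $\sigma$ on a set $A_a$ with $|A_a|=s$, then off $A_a$ the symbol $a_i$ is one of the two values $\ne\sigma_i$, so $a$ is \emph{binary} outside $A_a$. For three such words $a,b,c$, a coordinate $i$ is trifferent exactly when it lies in precisely one of $A_a,A_b,A_c$ — say $A_a$ — and the other two words carry the two distinct non-$\sigma_i$ symbols there; in particular the unique ``hitter'' of each trifferent coordinate lies in that word's agreement set. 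Hence the number of trifferent coordinates of $\{a,b,c\}$ equals
\[
\sum_{\mathrm{cyc}}\bigl|\{\,i\in A_a\setminus(A_b\cup A_c):\ b_i\ne c_i\,\}\bigr|,
\]
which is at most $|A_a|+|A_b|+|A_c|=3s$, and, being governed by the Hamming distances of the binary parts, is small unless those parts are pairwise far apart.

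The heart of the argument is then a Plotkin-type estimate. Summing the defining inequality (each triple triffers in $\ge m$ coordinates) over all $\binom{k}{3}$ triples of the $k$ candidate words, and rewriting the right-hand side through the binary distances of their off-agreement parts, shows that the \emph{average} number of trifferent coordinates per triple is at most about $\tfrac32 s$. Since every triple must reach $m$, this pins $s$ to the critical value $\approx\tfrac23 m$ and, at that value, forces the binary restrictions to be essentially equidistant at the Plotkin threshold; equidistant binary codes contain only $O(s)=O(m)$ words, which is precisely what yields a constant $c_m$ linear in $m$. I expect this to be the main obstacle: converting the heuristic ``average $\le\tfrac32 s$'' into a rigorous bound that (i) handles arbitrary overlaps of the agreement sets $A_x$ rather than only the extremal pairwise-disjoint case, and (ii) is sharp enough to extract the exact index $s$ and the exact constants $6(3\ell+2)$, $3(3\ell+1)$, $6\ell$. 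The residues $r\in\{0,1,2\}$ should be handled separately, since the parity of the equidistant optimum — and hence the largest admissible $s$ — depends on $m\bmod 3$; taking $s$ as large as the lemma allows is what minimizes $2^{s}/\binom{n}{s}$ and produces the stated optimum in each case.
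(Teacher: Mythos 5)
Your reduction is exactly the paper's: your count of pairs $(x,\sigma)$ with exactly $s$ agreements is the translate-averaging of Lemma~\ref{S-set} applied to the set $S$ of words with a prescribed symbol pattern in exactly $s$ positions (the paper uses ``exactly $k$ coordinates equal to $2$''), and your characterization of trifferent coordinates — exactly one of the three words agrees with $\sigma$ there and the other two carry the two distinct non-$\sigma_i$ symbols — is the same local structure the paper exploits. The index choices $s=2\ell+1,2\ell,2\ell-1$ also match the paper's $k=\lceil 2m/3\rceil-1$.

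However, there is a genuine gap at the one place where all the content lives: the claim that at most $c_m$ codewords can agree with a fixed $\sigma$ in exactly $s$ places. You explicitly defer this to a ``Plotkin-type estimate'' about equidistant binary codes and acknowledge you cannot yet make it rigorous or extract the constants $6(3\ell+2)$, $3(3\ell+1)$, $6\ell$. As written, the proposal therefore does not prove the theorem. Moreover, the route you sketch (forcing the binary restrictions to be equidistant and invoking a bound on equidistant codes) is both harder than necessary and not obviously sound, since nothing forces near-equality in your averaging. The paper closes this step with three lines of counting: with $t_i$ the number of the $T$ candidate words agreeing with $\sigma$ at position $i$ and $r_i,s_i$ the counts of the two non-$\sigma_i$ symbols, the number of (triple, trifferent coordinate) incidences is $\sum_i r_is_it_i\le\sum_i t_i\bigl(\tfrac{T-t_i}{2}\bigr)^2\le\tfrac{T^2}{4}\sum_i t_i=\tfrac{sT^3}{4}$, while it is at least $m\binom{T}{3}$; this yields the quadratic inequality $(2m-3s)T^2-6mT+4m\le 0$, and since $s=\lceil 2m/3\rceil-1$ makes $2m-3s\in\{1,2,3\}$ according to $m\bmod 3$, one reads off $T<6m,3m,2m$ directly. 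Your ``average $\le\tfrac32 s$'' heuristic is precisely this inequality in disguise; you should finish it by solving the quadratic rather than by appealing to structure of extremal binary codes.
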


Note that for constant $m$ this bound gives a polynomial improvement over Corollary~\ref{cor:triv_upper} and Equation~(\ref{eq:old_bound}), whereas for $m = \Theta(n)$ it gives an exponential improvement. 
For small values of $m$, we obtain the following bounds:
\begin{thm}\label{m_small}
    $$ T(n, 3) \leq T(n,2)\le \left(\frac{3}{2}\right)^n\frac{10}{n}, \ \ \  T(n,4)\le \left(\frac{3}{2}\right)^n\frac{44}{n^2}, \ \  T(n,5)\le \left(\frac{3}{2}\right)^n\frac{232}{n^3}.$$
\end{thm}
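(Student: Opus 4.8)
The first inequality $T(n,3)\le T(n,2)$ is immediate from the definition: every $3$-trifferent code is in particular $2$-trifferent, since three codewords triffering in at least three coordinates certainly triffer in at least two. It therefore remains to prove the three quantitative bounds. Each of these sharpens only the constant produced by Theorem~\ref{mainupper} at the corresponding value of $m$ (which gives the weaker leading constants $24$, $96$ and $1440$ for $m=2,4,5$ respectively, against the desired $10$, $44$ and $232$), and crucially it keeps the same power of $n$ in the denominator, namely $\binom{n}{1}$, $\binom{n}{2}$, $\binom{n}{3}$. So the plan is to re-run the counting underlying Theorem~\ref{mainupper} while tracking every constant exactly in these smallest cases, feeding in an additional structural input that is wasteful to invoke in general but sharp for small $m$.

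The structural input I would use is the following reformulation of $m$-trifference. Fix two codewords $x,y$, let $D\subseteq[n]$ be the set of coordinates on which they differ, and let $w$ be the string on $D$ whose $i$-th entry is the unique value different from $x_i$ and $y_i$. Since $x$ and $y$ agree outside $D$, a third codeword $z$ triffers with $\{x,y\}$ exactly at those $i\in D$ with $z_i=w_i$; hence $m$-trifference is equivalent to saying that every codeword other than $x,y$ agrees with $w$ in at least $m$ coordinates of $D$, and in particular that $C$ has minimum distance at least $m$. I would combine this with the Körner-type random restriction that produces the factor $(3/2)^n$: deleting one uniformly random symbol in every coordinate leaves a two-symbol subcube which can contain no triffering triple, so every deletion pattern has at most two survivors; averaging $\sum_v|W_v|=N2^n$ against $|W_v|\le 2$ recovers $N\le 2(3/2)^n$, and the number of patterns with two survivors is exactly $\sum_{\{x,y\}}2^{\,n-d(x,y)}$. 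The refinement is to bound this pair-sum not by the crude estimate coming from minimum distance $\ge m$ (which, as one checks, is far too weak for constant $m$), but by exploiting the agreement structure above, taking $\{x,y\}$ to realize the minimum distance $d\ge m$ and splitting the remaining codewords according to how many of their $\ge m$ forced agreements with $w$ on $D$ they use.

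For the three specific values $m=2,4,5$ the plan is to carry out this split exactly: estimate each agreement-class either by restricting to the complementary coordinates and invoking the induced lower-order problem, or via the trivial bounds of Proposition~\ref{trivi}, summing the resulting binomial contributions of the shape $\sum_{j}\binom{d}{j}2^{\,d-j}$ and keeping the exact coefficients $\binom{n}{1},\binom{n}{2},\binom{n}{3}$ in the denominator. Optimizing the split should collapse to the constants $10,44,232$. The main obstacle I anticipate is exactly this constant bookkeeping: the general argument discards constant factors through AM--GM-type and union-bound steps that are harmless when $m=\Theta(n)$ but dominant when $m$ is constant, so recovering the sharp values requires replacing those steps by an exact treatment of the small configurations, in particular the delicate interaction between the minimum-distance pair $\{x,y\}$ and codewords that coincide with each other on $D$ (which prevents a naive injection into $\{0,1,2\}^{D}$). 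Verifying that the exact extremal count for these few small configurations yields precisely $10$, $44$ and $232$, rather than merely the right order of magnitude, is where the real work will lie.
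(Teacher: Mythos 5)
Your proposal correctly disposes of the easy containment $T(n,3)\le T(n,2)$, and you have correctly identified that the content of the theorem is a constant-factor sharpening of Theorem~\ref{mainupper} at $m=2,4,5$ (with the weaker constants $24$, $96$, $1440$). But the rest is a plan, not a proof, and the plan is aimed in the wrong direction. The paper obtains all three bounds through Lemma~\ref{S-set}: one restricts to the slice $S$ of vectors with exactly $k$ twos ($k=1,2,3$ respectively), bounds $T_S(n,m)$ by an \emph{absolute constant} ($5$, $11$, $29$), and then the factors $10=5\cdot 2^1$, $44=11\cdot 2^2$, $232=29\cdot 2^3$ and the denominators $\binom{n}{1},\binom{n}{2},\binom{n}{3}$ fall out mechanically from $|S|=\binom{n}{k}2^{n-k}$. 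For $m=4,5$ the constant bound on $T_S$ comes from solving the quadratic $(2m-3k)T^2-6mT+4m\le 0$ exactly rather than discarding lower-order terms; for $m=2$ it comes from a separate pigeonhole argument (Theorem~\ref{m=2}) showing that a $2$-trifferent code inside the weight-one slice has at most $5$ elements. Your route --- K\"orner's random restriction, the identity $\sum_{\{x,y\}}2^{n-d(x,y)}$ for two-survivor patterns, a minimum-distance pair $\{x,y\}$ and agreement classes against the string $w$ on $D$ --- never invokes a slice restriction, and there is no mechanism in it that produces the polynomial factors $\binom{n}{k}$ in the denominator: the restriction argument by itself yields $2(3/2)^n$ with no polynomial savings, and you give no derivation showing the agreement-class split recovers one.

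More importantly, the quantitative core is simply absent. You write that ``optimizing the split should collapse to the constants $10,44,232$'' and that verifying this ``is where the real work will lie'' --- but that verification is precisely the theorem. None of the three constants is derived, no inequality is actually established, and the anticipated obstacle you describe (codewords coinciding on $D$, exact bookkeeping of small configurations) is exactly the part left undone. As it stands the proposal proves only $T(n,3)\le T(n,2)$ and does not establish any of the three displayed bounds; to repair it along the paper's lines you would need to (a) adopt the translation-averaging Lemma~\ref{S-set} on the weight-$k$ slice to generate the $\binom{n}{k}$ denominators, and (b) prove the absolute bounds $T_S(n,2)\le 5$, $T_S(n,4)\le 11$, $T_S(n,5)\le 29$ on those slices.
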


If $m$ is of order $\Theta(n)$, the bounds of Theorem \ref{mainupper} can be further improved and provide asymptotic improvements to (\ref{eq:old_bound}):

\begin{thm}[Refined upper bound when $m$ is large]\label{upper_mu} Let $m=\left\lceil\lambda n\right\rceil$ for some $\lambda \in (0, \frac29)$, and $\hat{x}$ be the smallest real root of the cubic polynomial
 ${\lambda-\frac{3}{2}x(1-x)^2}$.
 
Then 
     \[
    T(n,m)\le \left(\frac{3}{2}\right)^n\frac{C}{2^{ -\hat{x} n }\binom{n}{ \lceil \hat{x}n\rceil-1 }}, 
    \] with a suitable constant $C=C(\lambda)$.
\end{thm}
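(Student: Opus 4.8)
The plan is to sharpen the counting argument underlying Theorems~\ref{main} and~\ref{mainupper} by treating the size of the distinguished coordinate set as a free parameter and optimizing it. Write $x = s/n$ for a parameter $s \in \{1,\dots,n\}$ to be chosen, and let $C$ be an $m$-trifferent code of size $N = |C|$. First I would pass to a sub-alphabet restriction: choosing a set $S$ of $s$ coordinates to remain ternary and forbidding one symbol at each of the remaining $n-s$ coordinates, I would study the surviving subcode $C'$ of codewords compatible with the restriction. Since any three survivors can only triffer on $S$, the code $C'$ inherits a strong trifference constraint supported on $s$ coordinates, while a standard averaging over the $2^{\,n-s}$ choices of forbidden symbols contributes the factor $(3/2)^{\,n-s}$ familiar from Körner's bound and from Proposition~\ref{trivi}\ref{labelii}.

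The crux is to quantify exactly when this restriction forces $C'$ to be small. Here I would refine the double-counting that yields the phase-transition threshold $m \approx \tfrac29 n$ in Theorem~\ref{main}: rather than only bounding the number of free coordinates crudely by $s$, I would account for the probability that the two coordinates ``needed'' to complete a trifference both fall outside $S$, which introduces the factor $(1-x)^2$ and replaces the naive feasibility condition $m > s$ by the sharp condition $m \ge \tfrac32\, s\,(1-x)^2$, that is, $\lambda \ge \tfrac32 x(1-x)^2$. Together with a careful count of the $\binom{n}{s}$ admissible choices of $S$ --- the genuinely new ingredient compared with Theorem~\ref{mainupper}, which effectively fixes $x \approx \tfrac23\lambda$ --- this should yield an inequality of the shape $N \le (3/2)^n\, 2^{s}\, P(m)/\binom{n}{s}$ for a polynomial factor $P(m)$, valid precisely when $\tfrac32 x(1-x)^2 \le \lambda$.

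It then remains to optimize over $s$. The gain factor $\binom{n}{s} 2^{-s}$ is increasing in $s$ as long as $s \lesssim n/3$, since the ratio of consecutive terms equals $\tfrac12\cdot\tfrac{n-s}{s+1}$, which exceeds $1$ exactly when $s < (n-2)/3$. On the other hand the feasibility condition $\tfrac32 x(1-x)^2 \le \lambda$ restricts $x$ to $[0,\hat x]$, where $\hat x < \tfrac13$ is the smallest root of $\lambda - \tfrac32 x(1-x)^2$; indeed the map $x \mapsto \tfrac32 x(1-x)^2$ is increasing on $[0,\tfrac13]$ with maximum $\tfrac29$ at $\tfrac13$, which is exactly why the hypothesis $\lambda \in (0,\tfrac29)$ guarantees such a root. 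Hence the optimum is attained at the largest admissible value $s = \lceil \hat x n\rceil$, and substituting this choice --- after absorbing the polynomial prefactor and the integrality slack between $\lceil \hat x n\rceil$ and $\lceil \hat x n\rceil - 1$ into a constant $C = C(\lambda)$ --- gives the stated bound.

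I expect the main obstacle to be the second step: establishing the sharp feasibility condition with the correct $(1-x)^2$ dependence together with the accompanying $\binom{n}{s}$ count, since this is precisely where Theorem~\ref{upper_mu} improves on the cruder estimate of Theorem~\ref{mainupper}, and where one must ensure that the surviving subcode's trifferences are charged without loss. A secondary technical point is to verify that the constant $C(\lambda)$ does not blow up as $\lambda \uparrow \tfrac29$ (equivalently $\hat x \uparrow \tfrac13$); controlling the polynomial factor $P(m)$ uniformly in this regime, and reconciling it with the constant upper bound predicted by Theorem~\ref{main}, is the part I would check most carefully.
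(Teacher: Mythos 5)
There is a genuine gap: the mechanism you propose cannot produce the factor $\binom{n}{s}2^{-s}$, and the key feasibility condition $\lambda\ge\frac32x(1-x)^2$ is asserted rather than derived. If you forbid one symbol at each of the $n-s$ coordinates outside $S$ and average over the $\binom{n}{s}\cdot 3^{n-s}$ restrictions, each codeword survives exactly $\binom{n}{s}\cdot 2^{n-s}$ of them, so the $\binom{n}{s}$ cancels and the best you can extract is a surviving subcode of size at least $N(2/3)^{n-s}$. If that subcode is $O(1)$ you recover $N\le c(3/2)^{n-s}$, i.e.\ essentially Theorem~\ref{derived} (iterating Proposition~\ref{trivi}(iv)), which is exponentially weaker than the claimed $\binom{n}{\lceil\hat xn\rceil-1}2^{-\hat xn}$ gain. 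Moreover, in your setup the surviving subcode is an $m$-trifferent code whose trifferences live on $s$ coordinates, so the natural smallness criterion is $m>\frac29 s$ (via Theorem~\ref{largem}), i.e.\ $\lambda>\frac29 x$ --- not the cubic condition $\lambda\ge\frac32x(1-x)^2$. Your heuristic about ``the two coordinates needed to complete a trifference falling outside $S$'' does not correspond to any step of the restriction argument.

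The paper instead applies Lemma~\ref{S-set} with $S$ equal to the Hamming sphere of words containing exactly $k$ twos, so that $|S|=\binom{n}{k}2^{n-k}$ and the gain factor $\binom{n}{k}2^{-k}$ comes directly from $3^n/|S|$ rather than from any averaging over coordinate subsets. The constant bound on $T_S(n,m)$ then follows from the double count $m\binom{T}{3}\le\sum_i r_is_it_i$ combined with $r_is_it_i\le t_i\bigl(\frac{T-t_i}{2}\bigr)^2$ and the constraint $\sum_i t_i=kT$ (every codeword in $S$ has exactly $k$ twos); Jensen-type concavity of $x(T-x)^2$ on $[0,\frac23T]$ yields $m\binom{T}{3}\le\frac{T^3}{4}k(1-\frac{k}{n})^2$, and this is precisely where the $(1-x)^2$ factor and hence the cubic $\lambda-\frac32x(1-x)^2$ arise. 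Your optimization discussion (monotonicity of $\frac32x(1-x)^2$ on $[0,\frac13]$, taking $s$ near $\hat xn$) is correct as far as it goes, but without the sphere construction and the $\sum_i t_i=kT$ constraint there is no route to the stated inequality.
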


As far as lower bounds are concerned, we prove the following in the same range of parameters.

\begin{thm}[Lower bound]\label{mainlower} For a fixed value $0<\lambda<\frac29$, let $m=\left\lfloor \lambda n\right\rfloor$. Then there exists a constant $c>0$ such that
$$T(n,m)\ge c\left(\frac92\lambda\right)^{\frac12\lambda n}\left(\frac97(1-\lambda)\right)^{\frac12(1-\lambda)n}$$
    for $n$ large enough.
\end{thm}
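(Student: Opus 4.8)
The plan is to use a probabilistic construction followed by an alteration (deletion) step, in the spirit of the classical random-coding lower bounds. First I would sample $N$ codewords in $\{0,1,2\}^n$ independently, each having its $n$ coordinates chosen independently and uniformly from $\{0,1,2\}$. The key observation is that for any fixed triple of codewords and any fixed coordinate the three symbols are independent and uniform, so they triffer with probability exactly $\tfrac{3!}{3^3}=\tfrac29$; since the coordinates are independent, the number of coordinates at which a fixed triple triffers is distributed as $\Bin(n,\tfrac29)$. This is precisely the source of the threshold $\tfrac29$ in \Cref{main}, and it is why the uniform distribution is the right choice: any other product distribution on a coordinate would reduce the per-coordinate trifference probability below $\tfrac29$ and weaken the bound.

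Writing $p=\Pr[\Bin(n,\tfrac29)<m]$, the probability that a fixed triple fails to be $m$-trifferent is at most $p$, since the event ``the three codewords are distinct and triffer in fewer than $m$ coordinates'' is contained in $\{\Bin(n,\tfrac29)<m\}$. The second step is to estimate $p$ from above. As $m=\lfloor\lambda n\rfloor$ with $\lambda<\tfrac29$, we are in the lower tail below the mean $\tfrac29 n$, so the summands $\binom{n}{k}(\tfrac29)^k(\tfrac79)^{n-k}$ increase in $k$ for $k\le m-1$, with the backward ratio tending to $\tfrac{7\lambda}{2(1-\lambda)}<1$, a constant depending only on $\lambda$. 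Hence the whole tail is, up to a $\lambda$-dependent constant, dominated by its largest term $\binom{n}{m-1}(\tfrac29)^{m-1}(\tfrac79)^{n-m+1}$, and a Stirling estimate of $\binom{n}{m-1}$ yields
\[
p\le \frac{C_1}{\sqrt n}\left(\frac{2}{9\lambda}\right)^{\lambda n}\left(\frac{7}{9(1-\lambda)}\right)^{(1-\lambda)n}
\]
for a constant $C_1=C_1(\lambda)$. The crucial point is that the Stirling prefactor $n^{-1/2}$ works in our favour here.

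For the alteration step, let $X$ be the number of triples of codewords that are not $m$-trifferent, so that $\E[X]\le\binom{N}{3}p$. Choosing $N=\lfloor\sqrt3\,p^{-1/2}\rfloor$ makes $\binom{N}{3}p\le N/2$; deleting one codeword from each bad triple then leaves a subcode with expected size at least $N-\E[X]\ge N/2$ in which every triple is $m$-trifferent, and fixing an outcome meeting this expectation produces an $m$-trifferent code of size at least $N/2$. Consequently
\[
T(n,m)\ge \frac N2\ge c\,p^{-1/2}\ge c'\,n^{1/4}\left(\frac92\lambda\right)^{\frac12\lambda n}\left(\frac97(1-\lambda)\right)^{\frac12(1-\lambda)n},
\]
which is stronger than the claimed bound once $n$ is large. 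One routine technicality is that the sampled codewords need not be distinct: the expected number of coincident pairs is $\binom N2 3^{-n}$, and since $p\ge(\tfrac79)^n$ (the $k=0$ term) we have $N^2\le 3p^{-1}\le 3(\tfrac97)^n$, so this expectation is at most $\tfrac32(\tfrac37)^n=o(1)$ and collisions are absorbed into the deletion step without affecting the bound.

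The main obstacle I anticipate is the tail estimate for $p$: one must extract the polynomial prefactor from $\binom{n}{m-1}$ accurately (rather than only the crude exponential rate $e^{-n\,D(\lambda\,\|\,2/9)}$), because it is exactly the favourable $n^{-1/2}$ factor that allows the final bound to be a clean constant multiple of the target expression instead of losing a polynomial factor. Everything else is the standard expectation-plus-deletion argument, and the appearance of the quantities $\tfrac92\lambda$ and $\tfrac97(1-\lambda)$ in the statement is explained by the identity $p^{-1/2}\asymp\big((\tfrac{9\lambda}{2})^{\lambda}(\tfrac{9(1-\lambda)}{7})^{1-\lambda}\big)^{n/2}$.
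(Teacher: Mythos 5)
Your proposal is correct and follows essentially the same route as the paper: sample codewords uniformly at random, observe that a fixed triple triffers in $\Bin\left(n,\tfrac29\right)$ positions, bound the lower tail, and delete one codeword per bad triple with $N\asymp p^{-1/2}$. The only difference is cosmetic: the paper bounds the tail by the Chernoff/relative-entropy estimate $p\le 2^{-nH(\lambda,2/9)}$, whereas you bound it by a geometric series times the largest term, which picks up an extra $n^{-1/2}$ and hence a (slightly stronger, but unneeded) $n^{1/4}$ factor in the final bound.
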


The paper is organized as follows. In Section \ref{sec:notation} we introduce the necessary notation. We present the proofs of Theorems \ref{main}, \ref{mainlower}, \ref{mainupper} and \ref{upper_mu} in Section \ref{sec:mainsec}. Then we study the linear version of the problem in Section \ref{sec:linearsec}.  We describe the finite geometric analogues of $m$-trifferent linear codes and their relation to minimal codes. Using probabilistic constructions, we obtain lower bounds on linear $m$-trifferent codes $T_L(n,m)$. 
Finally, in Section \ref{sec:concluding}, we present computational results on $T(n,m)$ and $T_L(n,m)$, and discuss the relation between the sunflower conjecture and the trifference problem.

\section{Notation and preliminaries}\label{sec:notation}

For a codeword $c\in \Sigma^n$, $\sigma(c)$ denotes the \textit{support} of the codeword, i.e., the set of indices where $c$ is non-zero. The notation $[n]$ denotes the integer interval $\{x\in \mathbb{Z}: 1\le x\le n\}$.

\begin{mydef}
    A linear code $C \subseteq \mathbb{F}_q^n$ is called \textit{minimal} if for any two linearly  independent $c, c' \in C$, we have $\sigma(c) \not \subseteq \sigma(c')$. 
    A linear code $C \subseteq \mathbb{F}_q^n$ is called \textit{$m$-minimal} if after the deletion of any $m - 1$ coordinates, all codewords remain distinct and the remaining code is still minimal.
\end{mydef}

Logarithms in this paper are taken base $2$.

\begin{mydef}
The \textit{binary entropy function} $H(p)$ is defined as
$$H(p)=-p\log p-(1-p)\log(1-p)$$
\noindent for $0\le p\le 1$.     
\end{mydef}

We will use the following well-known bound on binomial coefficients:

\begin{lem}
$$\frac{1}{n+1}2^{nH\left(\frac{k}{n}\right)}\le \binom{n}{k}\le 2^{nH\left(\frac{k}{n}\right)}.$$
\end{lem}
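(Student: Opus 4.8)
The plan is to set $p = k/n$ and to observe that the single term $\binom{n}{k} p^k (1-p)^{n-k}$ appearing in the binomial expansion of $(p + (1-p))^n = 1$ already encodes the entropy factor exactly; both inequalities then follow by bounding this one term from above by $1$ and from below by $1/(n+1)$.

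First I would dispose of the degenerate cases $k = 0$ and $k = n$, where $H(k/n) = 0$ and $\binom{n}{k} = 1$, so that both inequalities hold trivially (reading $0\log 0 = 0$). For $0 < k < n$ I would record the key identity: taking base-$2$ logarithms, $\log\big(p^k (1-p)^{n-k}\big) = k \log p + (n-k)\log(1-p) = n\big(p\log p + (1-p)\log(1-p)\big) = -nH(p)$, so that $p^k(1-p)^{n-k} = 2^{-nH(k/n)}$. This identity is the whole engine of the proof: it converts a statement about the term into the desired statement about $\binom{n}{k}$.

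For the upper bound I would write $1 = (p + (1-p))^n = \sum_{j=0}^n \binom{n}{j} p^j (1-p)^{n-j}$ and keep only the nonnegative $j = k$ summand, giving $\binom{n}{k} p^k (1-p)^{n-k} \le 1$; substituting the identity yields $\binom{n}{k} \le 2^{nH(k/n)}$. For the lower bound I would argue that the $j = k$ term is the largest among the $n+1$ nonnegative terms in the same sum, so it is at least the average $1/(n+1)$, i.e. $\binom{n}{k}p^k(1-p)^{n-k} \ge \frac{1}{n+1}$, and again the identity converts this into $\binom{n}{k} \ge \frac{1}{n+1} 2^{nH(k/n)}$.

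The one step that genuinely requires verification — and the main (mild) obstacle — is the claim that the term is maximized at $j = k$. I would establish this by examining the ratio of consecutive terms $\frac{\binom{n}{j+1}p^{j+1}(1-p)^{n-j-1}}{\binom{n}{j}p^j(1-p)^{n-j}} = \frac{(n-j)\,p}{(j+1)(1-p)}$, which is at least $1$ exactly when $j \le (n+1)p - 1$. With $p = k/n$ this threshold equals $k - 1 + k/n$, so the terms strictly increase up to $j = k$ and decrease thereafter, placing the mode precisely at $j = k$, which is exactly what the lower bound needs.
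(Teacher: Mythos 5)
Your proof is correct in all details: the identity $p^k(1-p)^{n-k}=2^{-nH(k/n)}$ for $p=k/n$, the upper bound by keeping one term of $(p+(1-p))^n=1$, and the lower bound via the ratio computation showing the mode of $\Bin(n,p)$ sits exactly at $j=k$ (the threshold $(n+1)p-1=k-1+k/n$ lies strictly between $k-1$ and $k$), with the degenerate cases $k\in\{0,n\}$ handled separately. The paper gives no proof of this lemma, citing instead the standard reference; your argument is essentially the classical one found there, so there is nothing to patch.
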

\noindent For a refined bound, see \texorpdfstring{\cite[Lemma 17.5.1, p.~666.]{infotheory}}.

The\textit{ relative entropy} or Kullback-Liebler divergence is defined as $$D_\text{KL}(P \parallel Q) = \sum_{ x \in \mathcal{X} } P(x) \log\left(\frac{P(x)}{Q(x)}\right)$$ for a pair of discrete probability distributions $P$ and $Q$ defined on the same sample space $\mathcal{X}$. In the special case when $P$ and $Q$ correspond to the distribution of indicator variables with appearance probability $p$ and $p'$, we use the simpler notation $H(p, p')$, which simplifies to

$$H(p, p')= p\log \frac{p}{p'}+(1-p)\log\frac{1-p}{1-p'}. $$

\begin{mydef}
    For any $S \subseteq \{0,1,2\}^n$, we define $T_S(n, m)$ to be  the size of the largest $m$-trifferent set contained in $S$.
\end{mydef}
 The lemma below follows the ideas of the proof in \cite{bhandari2025improved} for the similar bound on $T_S(n)$. This statement will enable us to give upper bounds on $T(n,m)$ via bounding the maximum  size of $m$-trifferent sets contained in certain subsets of the space $\Sigma^n$.

\begin{lem}\label{S-set}
For any subset $S \subseteq \{0,1,2\}^n$,
\[ \frac{T(n, m)}{3^n} \leq \frac{T_S(n, m)}{|S|}.\]
\end{lem}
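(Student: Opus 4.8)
The plan is to use an averaging argument over a group of symmetries of the space $\{0,1,2\}^n$ that preserves the $m$-trifferent property while acting transitively on the space. The key observation is that whether three codewords triffer at a coordinate $i$ depends only on whether $\{x_i, y_i, z_i\} = \{0,1,2\}$, and this condition is invariant under applying any permutation of the alphabet $\{0,1,2\}$ at coordinate $i$. Hence the group $G = (\mathrm{Sym}(\{0,1,2\}))^n$, acting coordinatewise (where $g = (\pi_1, \dots, \pi_n)$ sends $c = (c_1,\dots,c_n)$ to $(\pi_1(c_1),\dots,\pi_n(c_n))$), maps every $m$-trifferent code to an $m$-trifferent code of the same size.

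First I would fix an optimal $m$-trifferent code $C \subseteq \{0,1,2\}^n$ with $|C| = T(n,m)$, and for a uniformly random $g \in G$ consider the image $g(C)$. Since $g$ preserves $m$-trifference, the intersection $g(C) \cap S$ is an $m$-trifferent set contained in $S$, so $|g(C) \cap S| \le T_S(n,m)$ for every $g$, and in particular $\E_g\!\left[\,|g(C)\cap S|\,\right] \le T_S(n,m)$.

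Next I would compute this expectation by linearity. Writing $\E_g\!\left[\,|g(C)\cap S|\,\right] = \sum_{c \in C} \PP_g\!\left[g(c) \in S\right]$, it remains to evaluate $\PP_g\!\left[g(c) \in S\right]$ for a fixed $c$. Because $\mathrm{Sym}(\{0,1,2\})$ acts transitively on $\{0,1,2\}$, the group $G$ acts transitively on $\{0,1,2\}^n$; combined with the uniform choice of $g$, the orbit--stabilizer relation shows that $g(c)$ is uniformly distributed over $\{0,1,2\}^n$. Therefore $\PP_g\!\left[g(c) \in S\right] = |S|/3^n$ independently of $c$, and the expectation equals $T(n,m)\cdot |S|/3^n$. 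Combining with the bound from the previous step gives $T(n,m)\,|S|/3^n \le T_S(n,m)$, which rearranges to the claim.

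The argument involves no real computation; the only points requiring care are the two structural facts about $G$ — that coordinatewise alphabet permutations preserve the $m$-trifferent property, and that they act transitively on $\{0,1,2\}^n$ so that $g(c)$ becomes uniform. Both are immediate once the right group is identified, so I expect the main (minor) obstacle to be simply recognizing that averaging over $(\mathrm{Sym}(\{0,1,2\}))^n$ — rather than, say, only coordinate permutations — is what makes every point of the space equally likely to land in $S$.
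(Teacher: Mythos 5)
Your proof is correct and is essentially the same averaging argument as the paper's: the paper averages over the translation group $\mathbb{F}_3^n$ acting on $S$ (noting explicitly that trifference is translation-invariant), while you average over the larger coordinatewise group $(\mathrm{Sym}(\{0,1,2\}))^n$ acting on $C$ — the same mechanism with a bigger (but equally sufficient) symmetry group.
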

\begin{proof}
    Let $C$ be an $m$-trifferent code in $\mathbb{F}_3^n$, identified with $\{0,1,2\}^n$. 
    For a vector $v \in \mathbb{F}_3^n$, let $S_v = S + v = \{x + v : x \in S\}$. 
    We double count the size of the set $\{(x, v) : x \in C \cap S_v\}$. 
    Since $x \in C$ is contained in $S_v$ if and only if $v = x - s$ for some $s \in S$, each element of $C$ is contained in $|S|$ many $S_v$'s.
    Therefore,
    \[\sum_{v \in \mathbb{F}_3^n} |C \cap S_v| = |C| |S|.\]
    By averaging, there must exist a vector $u$ such that 
    $|C \cap S_u| \geq |C| |S|/3^n$. 
    We know that $C \cap S_u$ is an $m$-trifferent set inside $S_u$, and since $S_u$ is just a translate of $S$, $|C \cap S_u| \leq T_S(n, m)$, giving us the inequality. 
\end{proof}
Notice that the proof of the previous lemma uses the fact that the set of subsets of $\F_3^n$ that triffer is closed under translation. 
\section{Proof of the lower and upper bounds}\label{sec:mainsec}

We start by proving the lower bound on $T(n,m)$.

\begin{thm}\label{thm:probconstr}
Suppose that $1\le m<\frac29n$. Then $T(n,m)\ge \frac{2\sqrt{2}}{3}\cdot 2^{nh/2}-1$, where $h=H\left(\frac{m}{n}, \frac29\right)$.

\end{thm}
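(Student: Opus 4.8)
The plan is to use the probabilistic deletion (alteration) method. First I would sample $N$ vectors $v_1,\dots,v_N$ independently and uniformly at random from $\{0,1,2\}^n$, where the value of $N$ is optimized at the end. For any fixed triple of these vectors, at each coordinate the three symbols are uniform and independent over $\{0,1,2\}^3$, so they triffer with probability $6/27=2/9$; since the coordinates are independent, the number $X$ of trifferent coordinates of the triple is distributed as $\Bin(n,2/9)$. Call a triple \emph{bad} if $X<m$. The crucial observation is that since $m\ge 1$, any non-bad triple (one with $X\ge m\ge 1$) triffers somewhere and hence automatically consists of three pairwise distinct vectors; thus the single condition $X\ge m$ encodes both distinctness and $m$-trifference, and I need not treat accidental coincidences among the $v_i$ separately.

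Next I would estimate the probability that a fixed triple is bad. Since $m/n<2/9$, this is a lower-tail event for a binomial, and the sharp Chernoff / Kullback–Leibler bound gives $\PP(X<m)\le \PP(X\le m)\le 2^{-nH(m/n,\,2/9)}=2^{-nh}$. By linearity of expectation, the expected number of bad triples among the $\binom{N}{3}$ triples is at most $\binom{N}{3}2^{-nh}\le \frac{N^3}{6}2^{-nh}$. I would then fix a sample attaining at most the expected number of bad triples and delete one vector from each bad triple. The surviving set contains no bad triple, so every triple among the survivors triffers in at least $m$ coordinates and, by the observation above, the survivors are pairwise distinct. Hence this set is an $m$-trifferent code, giving $T(n,m)\ge N-\frac{N^3}{6}2^{-nh}$.

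Finally I would optimize over $N$. Writing $q=2^{-nh}$ and $f(N)=N-\frac{q}{6}N^3$, the function $f$ is concave on $N>0$ with maximizer $N^\ast=\sqrt{2/q}=\sqrt{2}\,2^{nh/2}$ and maximum value $f(N^\ast)=\frac{2\sqrt{2}}{3}2^{nh/2}$. Since $N$ must be an integer, I would take $N$ to be the nearest integer to $N^\ast$; because $f''(N)=-qN$, a second-order Taylor estimate of $f$ around $N^\ast$ (where $f'(N^\ast)=0$) bounds the integrality loss by $\frac{qN^\ast}{2}=\frac{\sqrt{q}}{\sqrt{2}}<1$, since $q\le 1$. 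This yields $T(n,m)\ge \frac{2\sqrt{2}}{3}2^{nh/2}-1$, as claimed.

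The hard part, and the step that pins down the exact constant $\frac{2\sqrt{2}}{3}$, is invoking the sharp large-deviation exponent rather than a cruder entropy estimate: the precise bound $\PP(X<m)\le 2^{-nh}$ with $h=H(m/n,2/9)$ is exactly what makes the exponent $nh/2$ and the leading constant come out correctly after optimizing the cubic. Everything else—linearity of expectation, the single deletion step, and the elementary calculus optimization—is routine; the only other care needed is the observation that $m\ge 1$ lets trifference subsume distinctness, so no separate accounting for repeated samples is required.
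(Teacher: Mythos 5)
Your proposal is correct and follows essentially the same route as the paper: the alteration method with uniformly random codewords, the observation that the number of trifferent positions of a fixed triple is $\Bin(n,\frac29)$, the Chernoff/Kullback--Leibler tail bound $2^{-nh}$, and optimization of $N-\frac{N^3}{6}2^{-nh}$ at $N\approx\sqrt{2}\cdot 2^{nh/2}$. The only cosmetic difference is how the integrality of $N$ is handled (the paper takes $t=\lfloor\sqrt{2}\cdot 2^{nh/2}\rfloor$ and computes directly, losing the same additive $1$), and your explicit remark that badness of coincident samples subsumes distinctness is a nice touch the paper leaves implicit.
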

\begin{proof} We use the alteration method.
    For a positive integer $t$ fixed later on, choose codewords $c_1, c_2, ..., c_t\in \{0,1,2\}^n$ independently and uniformly at random, and let $C_0=\{c_1,c_2,...,c_t\}$. A triple of codewords $\{c_x,c_y,c_z\}$ is called \textit{bad} if they triffer at less than $m$ positions. Let $B$ denote the number of bad triples in $C_0$. Clearly $\ee[B]=\binom{t}{3}p_b$ where $p_b=\pp[\{c_1,c_2,c_3 \} \mathrm{ \ is \ bad}]$. Create a new code $C$ from $C_0$ by deleting one of the codewords from each bad triple. Then, $C$ will be $m$-trifferent, and $|C|\ge t-B$. Therefore, $T(n, m)\ge \ee[|C|]\ge t-\binom{t}{3}p_b$.

    If $c_1$, $c_2$ and $c_3$ are uniformly random elements of $\{0,1,2\}^n$, then for each $1\le i\le n$, the probability that the codewords triffer at position $i$ is $\frac{3!}{3^3}=\frac29$.
    Therefore, if $X$ denotes the number of positions where the three codewords triffer, then $X$ follows the binomial distribution $\Bin\left(n,\frac29\right)$. By the well-known Chernoff bound, $p_b\le \pp[X\le m]\le 2^{-nh}$, where $h=H\left(\frac{m}{n}, \frac29\right)$ satisfies $0<h<\log\frac97$.

    Thus, if we choose $t=\lfloor \sqrt{2}\cdot 2^{nh/2}\rfloor$, then we get \begin{equation*}
    T(n,m)\ge t-\binom{t}{3}p_b\ge t-\frac16t^3p_b\ge \sqrt{2}\cdot 2^{nh/2}-1-\frac{\sqrt{2}}{3}\cdot 2^{3nh/2}2^{-nh}=\frac{2\sqrt{2}}{3}\cdot 2^{nh/2}-1. \qedhere
    \end{equation*}
\end{proof}

\begin{rem}\label{rem:probconstr_r}
 (i) In the case where $m$ is a function of $n$ with $m=o(n)$ (for example $m\ge 1$ is a constant) and $n\to\infty$, we get that for some $c>0$,~ $T(n,m)\ge c\left(\frac{3}{\sqrt{7}}\right)^n$ for all $n$. Note that $\frac{3}{\sqrt{7}}\approx 1.1339$.

 (ii) If $m=\left\lfloor\lambda n\right\rfloor$ for a fixed value $0<\lambda<\frac29$, and $n\to \infty$, we get that for some $c>0$, $T(n,m)\ge c\cdot 2^{\frac12H(\lambda,\frac29)n}=cb(\lambda)^n$, where
$$b(\lambda)=2^{\frac12H\left(\lambda,\frac29\right)}=\left(\frac92\lambda\right)^{\frac12\lambda}\left(\frac97(1-\lambda)\right)^{\frac12(1-\lambda)}$$

is a strictly decreasing function for $\lambda\in \left[0,\frac29\right]$ with $b(0)=\frac{3}{\sqrt{7}}$ and $b\left(\frac29\right)=1$.
\end{rem}
\noindent This completes the proof of Theorem \ref{mainlower}, which also implies Theorem \ref{main} when $m<(\frac29-\eps)n$. Next, we complete the proof of Theorem \ref{main} by considering the case $m>(\frac29+\eps)n$. Note that the following result is also a consequence of Theorem 4 in \cite{Bassalygo1997HashCodes}.

\begin{thm}\label{largem}
    For any fixed $\frac29<\lambda\le 1$, there exists a constant $c(\lambda)$ such that $T(n, \left\lceil \lambda n\right\rceil) \le c(\lambda)$ for all $n$.
\end{thm}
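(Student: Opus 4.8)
The plan is to prove this by a double-counting argument on incidences between triples of codewords and the coordinates at which they triffer. Let $C$ be an $m$-trifferent code with $N=|C|$ and $m=\lceil\lambda n\rceil$. I would count the number of pairs $(\{x,y,z\}, i)$ where $\{x,y,z\}$ is an unordered triple of codewords, $i\in[n]$ is a position, and the triple triffers at $i$, and then bound this quantity from both sides.

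For the lower bound, every one of the $\binom{N}{3}$ triples triffers at at least $m$ positions by the definition of an $m$-trifferent code, so the total number of such incidences is at least $m\binom{N}{3}$. For the upper bound, I fix a position $i$ and let $n_{i,0},n_{i,1},n_{i,2}$ be the number of codewords taking the value $0,1,2$ respectively at coordinate $i$, so that $n_{i,0}+n_{i,1}+n_{i,2}=N$. A triple triffers at $i$ precisely when it uses all three symbols there, hence the number of triples triffering at $i$ is exactly $n_{i,0}n_{i,1}n_{i,2}$. By the AM-GM inequality this product is at most $(N/3)^3=N^3/27$, and summing over the $n$ positions gives at most $nN^3/27$ incidences in total.

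Combining the two estimates yields $m\binom{N}{3}\le nN^3/27$. Substituting $\binom{N}{3}=\tfrac16 N(N-1)(N-2)$, using $m\ge\lambda n$, and dividing through by $n$ and by $N$, this rearranges to
\[
\frac{9\lambda}{2}\le\frac{N^2}{(N-1)(N-2)}.
\]
Since $\lambda>\tfrac29$, the left-hand side is a fixed constant strictly greater than $1$, whereas the right-hand side decreases to $1$ as $N\to\infty$. Therefore the inequality can only hold for $N$ below some threshold, and solving it explicitly bounds $N$ by a constant $c(\lambda)$ depending only on $\lambda$, which is exactly the claimed statement.

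The computation is essentially self-contained and I expect no real obstacle; the one conceptual point worth emphasizing is that the AM-GM double count lands precisely on the critical density $\tfrac29$, so it is exactly the slack in the hypothesis $\lambda>\tfrac29$ that converts an asymptotic equality into a genuine finite upper bound on $N$. This also makes transparent why the threshold in Theorem~\ref{main} occurs at $m\approx\tfrac29 n$.
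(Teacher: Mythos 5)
Your proof is correct and is essentially identical to the paper's argument: the same double count of (triple, position) incidences, the same AM-GM bound of $N^3/27$ per coordinate, and the same conclusion that $\frac{9\lambda}{2} > 1$ forces $N$ to be bounded. No issues.
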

\begin{proof}
    Let $C=\{c_1,c_2,...,c_T\}\subseteq \{0,1,2\}^n$ be a code for which any three codewords triffer in at least $\lambda n$ positions.

    Let $K$ denote the number of quadruples $(x,y,z,i)$ such that:
    \begin{itemize}
        \itemsep0em
        \item $1\le x<y<z\le T$ and $1\le i\le n$ are integers,
        \item and codewords $c_x$, $c_y$ and $c_z$ triffer at position $i$.
    \end{itemize}
    
    We will bound $K$ from below and above. Firstly, we can see that for every triple $(x,y,z)$ with $x<y<z$, we have at least $\lambda n$ positions where the three codewords triffer, so $K\ge \binom{T}{3}\cdot \lambda n$.

    On the other hand, for a fixed value $1\le i\le n$, let us put an upper bound on the number of triples triffering at position $i$. If the number of codewords in $C$ having $i$-th coordinate $0$, $1$ and $2$ is $r_i$, $s_i$ and $t_i$ respectively, then the following hold:
    \begin{itemize}
        \itemsep0em
        \item $r_i,s_i,t_i\ge 0$,
        \item $r_i+s_i+t_i=T$,
        \item the number of triples triffering at position $i$ is $r_is_it_i$.
    \end{itemize}
    By the AM-GM inequality, $r_is_it_i\le \left(\frac{r_i+s_i+t_i}{3}\right)^3=\frac{1}{27}T^3$. Therefore,

    $$K=\sum_{i=1}^n r_is_it_i\le \frac{1}{27}nT^3.$$

    This means that $\binom{T}{3}\cdot \lambda n\le \frac{1}{27}nT^3$, giving

    $$\frac{T(T-1)(T-2)}{T^3}\lambda \le \frac{2}{9}.$$

    Since $\frac{T(T-1)(T-2)}{T^3}\to 1$ as $T\to \infty$ and $\frac{2}{9}/\lambda<1$, we conclude that $T$ is bounded by a~constant depending only on $\lambda$.
\end{proof}

We continue by proving upper bounds on $T(n,m)$. First, we present a consequence of the threshold value $\lambda=\frac29$ and of Proposition \ref{trivi}. This is similar to Theorem 5 of \cite{Bassalygo1997HashCodes}.

\begin{thm}\label{derived} Let $0<\alpha<\frac29$ and $0<\eps<\frac79$ be real numbers. Then
$$T\left(n,\left\lceil \left(\frac29-\alpha\right)n\right\rceil\right)\le c\cdot \left(\frac32\right)^{4.5(\alpha+\eps)n}$$ for some constant $c=c(\eps)$.
\end{thm}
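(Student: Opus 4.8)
The plan is to combine Theorem~\ref{largem} with the recursive bound of Proposition~\ref{trivi}~\ref{labelii}, using the latter to bridge the gap between the given threshold $\frac29-\alpha$ and a value strictly above $\frac29$ where Theorem~\ref{largem} forces boundedness. The key idea is that Proposition~\ref{trivi}~\ref{labelii} lets us increase the effective ``distance parameter'' $m$ by trading away coordinates: if we erase $k$ coordinates from an $m$-trifferent code of length $n$, we lose at most a multiplicative factor coming from the recursion, but we can instead read the inequality $T(n,m)\le T(n-1,m-1)$ in the opposite direction to relate a code with a smaller relative distance to one with a larger relative distance on a shorter block.

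More concretely, I would first set $m=\lceil(\frac29-\alpha)n\rceil$ and choose an integer $k$ so that after erasing $k$ coordinates the surviving parameters $(n-k, m-k)$ satisfy $\frac{m-k}{n-k}>\frac29$; a short computation shows that $k\approx \frac{\alpha}{\frac79+\alpha}n$ coordinates suffice, and the slack $\eps$ in the statement is exactly what lets us pick $\lambda'=\frac{m-k}{n-k}$ bounded away from $\frac29$ so that Theorem~\ref{largem} applies with a finite constant $c(\lambda')$. Here, however, the recursion in Proposition~\ref{trivi}~\ref{labelii} only gives $T(n,m)\le T(n-1,m-1)$, which decreases $n$ and $m$ in lockstep and therefore cannot by itself raise the relative distance; so instead I expect to iterate Proposition~\ref{trivi}~\ref{labeli} together with part~\ref{labelii} only to peel off coordinates and then invoke the AM--GM counting argument from Theorem~\ref{largem} directly on the reduced code. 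The cleaner route is to reverse the logic: we keep the full code but bound $T(n,m)$ by first discarding symbols at the $k$ chosen coordinates via part~\ref{labelii} applied $k$ times, yielding $T(n,m)\le T(n-k,m-k)$, and then apply Theorem~\ref{largem} to the length-$(n-k)$ code whose relative distance $\lambda'=\frac{m-k}{n-k}$ exceeds $\frac29$.

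The remaining work is to control the constant and to verify that the exponent $4.5(\alpha+\eps)$ emerges correctly. Since Theorem~\ref{largem} gives $T(n-k,m-k)\le c(\lambda')$, a constant independent of $n$, the bound is actually constant once $k$ is chosen linearly in $n$; to match the stated form $c\cdot(\frac32)^{4.5(\alpha+\eps)n}$ I would instead choose $k$ slightly smaller so that part~\ref{labelii} is applied fewer times, absorbing the deficit into the exponential factor. The factor $4.5=\frac92$ is the reciprocal of the threshold slope, reflecting that moving the relative distance from $\frac29-\alpha$ up past $\frac29$ costs roughly $\frac92\alpha n$ erasures, each erasure contributing a factor bounded in terms of $\frac32$ through the interplay of parts~\ref{labelii} and the trivial upper bound on $T(n,1)$. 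The main obstacle will be making the one-sided recursion of Proposition~\ref{trivi}~\ref{labelii} actually increase the relative distance in a quantitatively tight way: naively it preserves $n-m$, so the correct bookkeeping requires choosing the number of erased coordinates to optimize the tradeoff between the constant $c(\lambda')$ from Theorem~\ref{largem} and the accumulated $(\frac32)$-factors, and then checking that the optimum yields exactly the claimed exponent with an $\eps$-dependent constant.
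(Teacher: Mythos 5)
There is a genuine gap. The mechanism you settle on --- applying Proposition~\ref{trivi}~\ref{labelii} $k$ times to get $T(n,m)\le T(n-k,m-k)$ and then invoking Theorem~\ref{largem} on the shorter code --- cannot work, and in fact you point out the reason yourself before contradicting it: since $m<n$, one has $\frac{m-1}{n-1}<\frac{m}{n}$, so each application of part~\ref{labelii} \emph{decreases} the relative distance. Your claimed computation that $k\approx\frac{\alpha}{7/9+\alpha}n$ erasures make $\frac{m-k}{n-k}>\frac29$ is wrong: the requirement $\frac{m-k}{n-k}>\frac29$ is equivalent to $7k<9m-2n$, and with $m\approx(\frac29-\alpha)n$ the right-hand side equals $-9\alpha n<0$, so no admissible $k$ exists. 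Consequently Theorem~\ref{largem} never becomes applicable along this route, and the proof does not close.

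The missing ingredient is Proposition~\ref{trivi}~(iv), which you never invoke: $T(n,m)\le\frac32\,T(n-1,m)$ shortens the code while keeping $m$ \emph{fixed}, so the relative distance $\frac{m}{n-t}$ genuinely increases, at a cost of a factor $\frac32$ per step. This is exactly the paper's argument: choose $t=\lceil n-\frac{m}{2/9+\eps}\rceil$ so that $m\ge(\frac29+\eps)(n-t)$, apply Theorem~\ref{largem} to bound $T(n-t,m)$ by a constant $c'(\eps)$, and observe that $t\le n\bigl(1-\frac{2/9-\alpha}{2/9+\eps}\bigr)+1\le\frac92(\alpha+\eps)n+1$, which produces the stated exponent. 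Your heuristic remarks about the factor $\frac92$ being the reciprocal threshold slope and about a $\frac32$-cost per erasure are pointing at the right answer, but the concrete recursion you propose to realize them is the wrong one of the four in Proposition~\ref{trivi}.
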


\begin{proof} For $m=\left\lceil \left(\frac29-\alpha\right) n\right\rceil$, Proposition \ref{trivi} (iv) can be applied $t$ times iteratively, where $$t=\left\lceil n-\frac{1}{\frac29+\eps}m\right\rceil.$$

We hence get $T(n,m)\le \left(\frac32\right)^t T(n-t,m)$ and it follows from the choice of $t$ that $m\ge \left(\frac29+\eps\right)(n-t)$, so by Theorem \ref{largem} we have that $T(n-t,m)$ is at most a constant $c'=c'(\eps)$. Altogether, $$T\left(n, \left\lceil \left(\frac29-\alpha\right) n\right\rceil\right)\le c'\cdot \left(\frac32\right)^{n-\frac{1}{\frac29+\eps}m+1}\le c\cdot \left(\frac32\right)^{n\left(1-\frac{2/9-\alpha}{2/9+\eps}\right)} \le c\cdot \left(\frac32\right)^{\frac92(\alpha+\eps)n},$$

where $c(\eps)=\frac32c'(\eps)$.
\end{proof}

We now look at the other regime where $m$ is small. 
We first prove a better upper bound than $T(n, 2) \leq T(n, 1) \leq c n^{-2/5} (3/2)^n$.

\begin{thm}\label{m=2}
    \[T(n, 2) \leq \frac{10}{n} \left(\frac32\right)^n.\]
\end{thm}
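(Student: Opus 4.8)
The plan is to apply Lemma~\ref{S-set} with a cleverly chosen subset $S\subseteq\{0,1,2\}^n$ on which $2$-trifferent sets are provably small, and then transfer this bound to all of $\{0,1,2\}^n$. The natural choice, following the strategy of \cite{bhandari2025improved} for $T_S(n)$, is to let $S$ be a \emph{single Hamming sphere} or a union of a few spheres around a fixed vector, so that $|S|$ is a controlled binomial quantity like $\binom{n}{k}$ and, crucially, the geometry of $S$ forces any three codewords that pairwise differ in enough coordinates to already triffer many times. For $m=2$ we want $|S|$ to be roughly $2^n$ in order, which suggests taking $S$ to be those vectors whose coordinates lie in $\{0,1\}$ — i.e. the binary cube $\{0,1\}^n$ — or a translate/variant thereof; then $|S|=2^n$ and $T(n,2)/3^n\le T_S(n,2)/2^n$, so I would need $T_S(n,2)\le \frac{10}{n}2^n$ to conclude.

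The heart of the argument is therefore to bound $T_S(n,2)$ for this specially chosen $S$. First I would observe that inside $S$ the trifference condition becomes rigid: if $S$ uses only two of the three symbols in most coordinates, then for three codewords to triffer at a position, one of them must deviate from the ``binary'' pattern there, which severely limits where trifference can occur. I would then set up a double-counting or weighting argument on pairs (codeword, coordinate), analogous to the one in the proof of Theorem~\ref{largem}: sum over coordinates the number of trifferring triples, bound each coordinate's contribution using the symbol-frequency constraint imposed by $S$, and compare against the lower bound $\binom{T}{3}\cdot 2$ coming from the $2$-trifferent hypothesis. The factor $\binom{n}{2}$-type denominators appearing in Theorem~\ref{mainupper} (which specializes to exactly this $m=2$ case) strongly suggest that the correct bookkeeping tracks \emph{pairs} of coordinates where codewords agree on the binary pattern, so I would refine the count to pairs of positions and extract the $1/n$ saving from there.

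The main obstacle will be getting the constant and the exact power of $n$ right: the elementary AM-GM bound used for Theorem~\ref{largem} gives only a constant-factor statement, so to produce the $\frac{10}{n}(\tfrac32)^n$ shape I must exploit the sphere/cube structure of $S$ more delicately than a single application of AM-GM allows. Concretely, the hard part is to show that trifferring triples inside $S$ are forced to ``spread out'' their trifference positions in a way that makes the per-coordinate contributions sum to something of order $nT^3/n^{O(1)}$ rather than $nT^3$, yielding the extra polynomial factor. Once the right $S$ is fixed and this spreading phenomenon is quantified — most naturally via the frequency variables $r_i,s_i,t_i$ constrained by the fact that at most one symbol is ``off-pattern'' at each coordinate — the final inequality $T_S(n,2)\le \frac{10}{n}2^n$ should follow by optimizing the resulting bound, and Lemma~\ref{S-set} then delivers the stated estimate for $T(n,2)$.
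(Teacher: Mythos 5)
Your meta-strategy (apply Lemma~\ref{S-set} to a carefully chosen layer $S$ and bound $T_S(n,2)$ there) is exactly the paper's, but your concrete choice of $S$ and your target bound for $T_S$ both fail. If $S=\{0,1\}^n$, then no three codewords of $S$ can triffer at any position (no coordinate ever equals $2$), so a $2$-trifferent subset of $S$ has at most two elements and $T_S(n,2)=2$; Lemma~\ref{S-set} then yields only the classical bound $T(n,2)\le 2\left(\frac32\right)^n$, with no $1/n$ saving. Moreover your arithmetic is off: with $|S|=2^n$, combining $T(n,2)\le 3^n\,T_S(n,2)/|S|$ with $T_S(n,2)\le \frac{10}{n}2^n$ would give $T(n,2)\le\frac{10}{n}3^n$, not $\frac{10}{n}\left(\frac32\right)^n$; to reach the stated bound from this $S$ you would need $T_S(n,2)\le 10/n<1$, which is impossible. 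The $1/n$ factor has to come from $|S|$ exceeding $2^n$ by a factor of order $n$ while $T_S(n,2)$ stays bounded by an absolute constant; the paper takes $S=\{x:\#\{i:x_i=2\}=1\}$, so $|S|=n\,2^{n-1}$, and proves $T_S(n,2)\le 5$.

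Your proposed engine for bounding $T_S$ --- double counting trifferent triples against coordinates via the frequency variables $r_i,s_i,t_i$, as in Theorem~\ref{largem} --- is also not what delivers the constant $10$. That computation is carried out in the proof of Theorem~\ref{upper} and, for $m=2$ (with $k=1$), gives only $T_S\le 2(3+\sqrt7)$, i.e.\ $T_S\le 11$, hence a bound of roughly $\frac{22}{n}\left(\frac32\right)^n$. The paper instead argues directly: since each codeword of its $S$ has a unique coordinate equal to $2$, two codewords sharing that coordinate already destroy $2$-trifference, so the positions of the $2$'s are pairwise distinct; a pigeonhole on the first coordinate plus a short case analysis then shows that any six codewords contain a triple triffering in at most one position. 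You would need to supply an argument of this ad hoc combinatorial kind (or accept a worse constant) to complete the proof.
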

\begin{proof}
    We use the setup of Lemma \ref{S-set} for the set $S = \{x \in \{0, 1, 2\}^n : \# \{i : x_i = 2\} = 1\}$. Let $C\subseteq S$ be a $2$-trifferent code. Observe that if $x_i = y_i = 2$ for some $x, y \in S$ and $i \in [n]$, then for any $z \in S$ the triple $(x,y,z)$ would triffer in at most one position. Hence, for any $i \in [n]$ there exists at most one $x \in C$ such that $x_i = 2$.

    Fix some $t\in C$ and assume that $t_1 = 2$. The remaining codewords in $C$ start with $0$ or $1$ so if $|C| \geq 6$, then by pigeonhole principle there exist three codewords $x, y, z \in C$ such that $x_1 = y_1 = z_1$. 
    Without loss of generality, $t = (2, *, \cdots, *)$, $x = (0, 2, *, \cdots, *)$, $y = (0, *, 2, *, \cdots, *)$, and $z = (0, *, *, 2, *, \cdots, *)$, where each $*$ is either a $0$ or $1$. 
    By considering the triples $t, x, y$ and $t, x, z$, we get $y_2 = z_2$, and from the triples $t, x, y$ and $t, y, z$ we get $x_3 = z_3$. 
    Therefore, $\{x_i, y_i, z_i\} = \{0, 1, 2\}$ is only possible at $i = 4$, so $x$, $y$ and $z$ triffer in at most $1$ position, contradicting the $2$-trifference property.

    Hence $|C|\le 5$, and by Lemma \ref{S-set} we get \begin{equation*}
    T(n,2)\le \frac{5\cdot 3^n}{n\cdot 2^{n-1}}=\frac{10}{n}\left(\frac32\right)^n.\qedhere
    \end{equation*}
\end{proof}

Before proving the main upper bound (Theorem \ref{upper_mu}), we present an upper bound in the spirit of the bound on $T(n,1)$ given by Bhandari and Khetan \cite{bhandari2025improved}, using extremal graph theoretic tools for an auxiliary graph assigned to the trifferent code.

\begin{thm} \label{aux_graph}  
    \[ T(n,2\ell+1) \leq  \left( \frac 32 \right)^n \frac{\binom{n}{\ell}}{\binom{n}{2\ell}} \cdot 4(2\ell+1). \] 
\end{thm}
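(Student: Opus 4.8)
The plan is to combine the averaging bound of Lemma~\ref{S-set} with a double-counting argument over $\ell$-subsets, exploiting the trifference property through an auxiliary graph. First I would apply Lemma~\ref{S-set} to the slice $S=\{x\in\{0,1,2\}^n:\#\{i:x_i=2\}=2\ell\}$ of words with exactly $2\ell$ twos, for which $|S|=\binom{n}{2\ell}2^{n-2\ell}$. This gives
\[
T(n,2\ell+1)\le\frac{3^n}{|S|}\,T_S(n,2\ell+1)=\left(\frac32\right)^n\frac{4^\ell}{\binom{n}{2\ell}}\,T_S(n,2\ell+1),
\]
so it suffices to bound the largest $(2\ell+1)$-trifferent code $C\subseteq S$ by roughly $\binom{n}{\ell}/\binom{2\ell}{\ell}$. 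For $x\in C$ write $A_x\subseteq[n]$ for its $2\ell$-element set of $2$-coordinates. The idea is to count incidences between codewords and the $\ell$-subsets of their supports: each $x$ contributes $\binom{2\ell}{\ell}$ such subsets, so if every fixed $F\in\binom{[n]}{\ell}$ is contained in $A_x$ for at most $D$ codewords, then $|C|\binom{2\ell}{\ell}\le D\binom{n}{\ell}$ and hence $T_S\le D\binom{n}{\ell}/\binom{2\ell}{\ell}$.

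The heart of the proof is the occupancy claim that $D\le 4$. Fix $F\in\binom{[n]}{\ell}$ and let $x_1,\dots,x_D$ be the codewords whose support contains $F$. Since each of them carries a $2$ on every coordinate of $F$, no triple among them can triffer inside $F$; thus every triple must already triffer at least $2\ell+1$ times on $[n]\setminus F$, where each $x_i$ restricts to a word with exactly $\ell$ twos. A triple of such words can triffer at most $3\ell$ times, and a position in the support of $x_i$ only triffers when the binary parts of the other two codewords disagree there, so the requirement of $\ge 2\ell+1$ trifferences forces the binary restrictions of the $x_i$ onto each other's support blocks to be pairwise far apart in a strongly coupled way. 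Encoding these agreement/disagreement patterns as an auxiliary graph on $\{x_1,\dots,x_D\}$ turns ``too many mutually compatible codewords'' into a forbidden subgraph, and an extremal (Tur\'an/Ramsey-type) estimate then forces $D\le 4$. The first, easier, part of this claim is that no two of the $x_i$ can share the same support outside $F$, since then any third codeword would create at most $\ell<2\ell+1$ trifferences.

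Granting $D\le4$, I would finish with
\[
T(n,2\ell+1)\le\left(\frac32\right)^n\frac{4^\ell}{\binom{n}{2\ell}}\cdot\frac{4\binom{n}{\ell}}{\binom{2\ell}{\ell}}=\left(\frac32\right)^n\frac{\binom{n}{\ell}}{\binom{n}{2\ell}}\cdot\frac{4^{\ell+1}}{\binom{2\ell}{\ell}}\le\left(\frac32\right)^n\frac{\binom{n}{\ell}}{\binom{n}{2\ell}}\cdot4(2\ell+1),
\]
where the last step uses the standard estimate $\binom{2\ell}{\ell}\ge 4^\ell/(2\ell+1)$, equivalently $4^\ell/\binom{2\ell}{\ell}\le 2\ell+1$. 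I expect the main obstacle to be the occupancy claim $D\le 4$: the support-overlap data alone cannot force it (pairwise disjoint supports are harmless set-theoretically), so the argument must genuinely track the binary coordinates through the auxiliary graph and convert the trifference budget on $[n]\setminus F$ into the right extremal-graph statement with the correct absolute constant.
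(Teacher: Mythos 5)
Your framework coincides with the paper's up to the decisive combinatorial step: you invoke Lemma~\ref{S-set} on the same slice $S$ of words with exactly $2\ell$ twos, your incidence count $|C|\binom{2\ell}{\ell}\le D\binom{n}{\ell}$ is correct, and the closing arithmetic (including $4^{\ell}/\binom{2\ell}{\ell}\le 2\ell+1$) is fine. The gap is exactly where you flag it: the occupancy claim $D\le 4$ is asserted, not proved. ``Encoding the agreement/disagreement patterns as an auxiliary graph'' and appealing to ``an extremal (Tur\'an/Ramsey-type) estimate'' is not an argument --- you never specify the forbidden configuration or why it caps $D$ at $4$, and the natural first attempt (forcing all pairwise distances of the binary restrictions above $(2\ell+1)/3$ and applying a Plotkin-type bound) does not go through, because the trifference budget of a triple is split across three different blocks and need not be spread evenly. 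As written, the proposal does not prove the theorem.

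The claim is nevertheless true (in fact $D\le 3$), and a short double count closes your gap. Write $B_i=A_{x_i}\setminus F$. A triple $\{x_i,x_j,x_k\}$ can triffer only at positions lying in exactly one of $B_i,B_j,B_k$, and at such a position $p\in B_i$ it triffers iff the binary values $x_j(p)\ne x_k(p)$. Summing the requirement ``at least $2\ell+1$ trifferences'' over all $\binom{D}{3}$ triples and regrouping by blocks, each position of $B_i$ contributes at most $\lfloor (D-1)^2/4\rfloor$ disagreeing pairs among the $D-1$ restrictions to $B_i$, so the total is at most $D\lfloor (D-1)^2/4\rfloor\ell$; for $D=4$ this is $8\ell<4(2\ell+1)$, a contradiction (overlaps among the $B_i$ only shrink the left side further). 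The paper takes a genuinely different route that avoids any degree bound: it forms a bipartite graph on two copies of the $\ell$-subsets whose edges are the balanced partitions $I_1\cup I_2$ of the supports, shows it is $P_4$-free (the three codewords along a path would have to triffer inside $I_1\cup I_4$, a set of size at most $2\ell$), and concludes it is a star forest with fewer than $2\binom{n}{\ell}$ edges, each edge serving at most two codewords. Both roads yield $T_S(n,2\ell+1)\le 4\binom{n}{\ell}/\binom{2\ell}{\ell}$; the paper's is self-contained, while yours is a viable plan whose central lemma was left open.
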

\begin{proof}
    We now use the setup of Lemma \ref{S-set} for $m=2\ell+1$ and $S = \{x \in \{0, 1, 2\}^n : \# \{i : x_i = 2\} = 2\ell\}$.
    We wish to bound $T_S(n, 2\ell + 1)$ from above. To do this, we will define a bipartite graph $G$ on the two vertex classes $U$ and $V$, where $U$ and $V$ are disjoint sets with $|U|=|V|=\binom{n}{\ell}$, both of whose elements are labelled with the $\ell$-element subsets of $[n]$. Fix any $(2\ell+1)$-trifferent code $C \subseteq S$. For each codeword $x \in C$, there exists a~set of indices $I \subseteq [n]$ of size $2m$ such that $x_i = 2$ for each $i \in I$ and $x_i \neq 2$ otherwise. For each partition $I = I_1 \cup I_2$ with $|I_1| = |I_2| = \ell$, we add an edge $(I_1, I_2)$ to $G$, where $I_1 \in U$ and $I_2 \in V$. This way, we add $\binom{2\ell}{\ell}$ edges for each $x \in C$. Moreover, since $C$ is $(2\ell+1)$-trifferent, each edge corresponds to at most two different codewords from $C$. Therefore, $T_S(n, 2\ell+1)$ is at most $\frac{2}{\binom{2\ell}{\ell}} |E(G)|$.\\
    We claim that $G$ does not contain a $P_4$ (a four-vertex path). Assume it is not the case and $(I_1, I_2)$, $(I_3, I_2)$, and $(I_3, I_4)$ are edges of some $P_4$ (where $I_1, I_3 \in U$ and $I_2, I_4 \in V$) and these edges correspond to some codewords $x$, $y$, $z \in C$. Then, since $I_1 \cup I_2 \neq I_2 \cup I_3$, we must have $x \neq y$; analogously, $y \neq z$. Moreover, since $I_2 \neq I_4$ and both are of the same size and their union is disjoint from $I_3$, there exists $i \in I_2 \setminus (I_3 \cup I_4)$. In particular, $x_i = 2 \neq z_i$, which implies $x \neq z$, hence $x$, $y$, $z$ are pairwise different. Since $x,y,z$ are codewords from a $(2\ell+1)$-trifferent code, there must exist a~set $J \subseteq [n]$ of size at least $2\ell+1$ such that $\{x_i, y_i, z_i\} = \{0,1,2\}$ for each $i \in J$. However, it is easy to see that $J$ must be a~subset of $I_1 \cup I_4$, but on the other hand, $I_1 \cup I_4$ consists of at most $2\ell$ elements, which is a~contradiction.\\
    Since $G$ is $P_4$-free (and also triangle-free by definition), it must be a~star forest, hence it contains at most $|V(G)| - 1 < 2\binom{n}{\ell}$ edges. This implies $T_S(n, 2\ell+1) \leq  \frac{4}{\binom{2\ell}{\ell}} \cdot \binom{n}{\ell}$ and \begin{equation*} T(n,2\ell+1) \leq 3^n \cdot \frac{T_S(n,2\ell+1)}{|S|} \leq 3^n \cdot \frac{\frac{4}{\binom{2\ell}{\ell}} \cdot \binom{n}{\ell}}{\binom{n}{2\ell}2^{n-2\ell}} \leq \left( \frac 32 \right)^n \frac{\binom{n}{\ell}}{\binom{n}{2\ell}} \frac{4\cdot 2^{2\ell}}{\binom{2\ell}{\ell}} \leq \left( \frac 32 \right)^n \frac{\binom{n}{\ell}}{\binom{n}{2\ell}} \cdot 4(2\ell+1). \qedhere \end{equation*}
\end{proof}

Next, we prove Theorem \ref{upper} and \ref{upper_mu} and compare them with the upper and lower bounds above.

\begin{thm}\label{upper} Suppose that $m\le \frac29n.$ Then \[
    T(n,m)\le \left(\frac{3}{2}\right)^n\frac{3m\cdot 2^{\lceil 2m/3\rceil }}{\binom{n}{\lceil 2m/3\rceil -1}}.
    \]

\end{thm}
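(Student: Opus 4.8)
The plan is to apply Lemma~\ref{S-set} to the layer $S = \{x \in \{0,1,2\}^n : \#\{i : x_i = 2\} = k-1\}$, where $k = \lceil 2m/3\rceil$, so that every codeword of $S$ has exactly $k-1$ coordinates equal to $2$. Here $|S| = \binom{n}{k-1}2^{\,n-k+1}$, and Lemma~\ref{S-set} gives $T(n,m) \le 3^n T_S(n,m)/|S| = (3/2)^n\, 2^{\,k-1} T_S(n,m)/\binom{n}{k-1}$. Comparing with the target, it therefore suffices to establish the $n$-independent bound $T_S(n,m) \le 6m$. What makes a constant bound possible is the inequality $3(k-1) \le 2m-1$, valid for each residue of $m$ modulo $3$: for any triple, the number of positions lying in exactly one of the three supports is strictly less than $2m$, while the triple must triffer in at least $m$ of them.

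First I would record two structural facts about an $m$-trifferent $C \subseteq S$, writing $A_x = \{i : x_i = 2\}$. Fix a triple $x,y,z$: a position can be a trifference position only if it lies in exactly one of $A_x, A_y, A_z$, and at such a position the unique ``owner'' carries a $2$ while the triple triffers precisely when the other two codewords carry different bits. Call a position in exactly one support an \emph{agreement} if instead the two non-owners carry equal bits; then the number of trifference positions equals (positions in exactly one support) minus (agreements). Since the former is at most $3(k-1)$ and the latter is at least $m$, every triple has at most $3(k-1)-m \le m-1$ agreements. Applying the same reasoning to a pair $x,y$ and an arbitrary third codeword yields the pairwise bound $|A_x \cap A_y| \le \tfrac12(3(k-1)-m)$.

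The heart of the argument is a double count of the total number of agreements over all triples, which is at most $(3(k-1)-m)\binom{N}{3}$ with $N=|C|$. Counting instead position by position, a position $\alpha$ owned by $d_\alpha$ codewords has its $N-d_\alpha$ non-owners split into $g_\alpha$ carrying $0$ and $h_\alpha$ carrying $1$, and $\alpha$ contributes $d_\alpha\big(\binom{g_\alpha}{2}+\binom{h_\alpha}{2}\big)$ agreements, one for each choice of owner together with an agreeing pair of non-owners. When the supports are pairwise disjoint every $d_\alpha \in \{0,1\}$ and $\sum_\alpha d_\alpha = N(k-1)$, so convexity ($\binom{g}{2}+\binom{h}{2}\ge \tfrac14(N-1)(N-3)$) gives $N(k-1)\tfrac14(N-1)(N-3) \le (3(k-1)-m)\binom{N}{3}$. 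Because $k-1 < 2m/3$, this inequality fails once $N$ exceeds an explicit constant, and solving it yields $N \le 2m+1$. Reassuringly, for $m=2$ this reproduces the bound $T_S \le 5$ of Theorem~\ref{m=2}, and for $m=3$ it gives $T_S \le 3$.

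The main obstacle is the general case, where supports may overlap so that a single position is owned by many codewords and the weights $d_\alpha$ are no longer bounded by $1$; an adversary could in principle try to concentrate the supports so as to shrink the position-by-position count and evade the contradiction. This is precisely what the pairwise-intersection bound of the second paragraph prevents: $\sum_\alpha \binom{d_\alpha}{2} = \sum_{x<y}|A_x \cap A_y| \le \binom{N}{2}\cdot \tfrac12(3(k-1)-m)$ caps the concentration of the supports, and inserting this constraint into the position-by-position lower bound recovers the same linear-in-$m$ conclusion, so that $T_S(n,m) \le 6m$ in all cases. Substituting this into Lemma~\ref{S-set} as above gives the stated bound on $T(n,m)$. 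I expect the bookkeeping in this last step—optimising the double count against the concentration constraint—to be the only genuinely delicate part.
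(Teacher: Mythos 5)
Your frame is the same as the paper's: the same layer $S$ (exactly $\lceil 2m/3\rceil-1$ twos), Lemma~\ref{S-set}, and a triple--position double count aiming at the $n$-independent bound $T_S(n,m)\le 6m$; the arithmetic converting $T_S\le 6m$ into the stated inequality is correct, and your disjoint-support computation is also fine. The gap is exactly the step you defer as ``the only genuinely delicate part'': it does not close. By counting \emph{agreements} with the crude per-triple cap $3(k-1)-m$ instead of counting trifference positions directly, you discard the per-triple information ``positions in exactly one support $\ge$ positions of trifference $\ge m$'', and the discarded quantity is precisely the total multiplicity of positions lying in two or three of the supports of a triple --- which is what your concentration cap must then recover. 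It cannot. Writing $\Sigma_j=\sum_\alpha d_\alpha^j$, your left-hand side is $\frac14\bigl((N^2-2N)\Sigma_1-(2N-2)\Sigma_2+\Sigma_3\bigr)$ with $\Sigma_1=N(k-1)$ and $\Sigma_2-\Sigma_1\le\binom{N}{2}\bigl(3(k-1)-m\bigr)$; minimizing over all weight vectors meeting these constraints (all nonzero $d_\alpha$ equal, $\Sigma_2$ at its cap) gives a lower bound of order $N^3\bigl((k-1)-p\bigr)^2/\bigl(4(k-1)\bigr)$ where $\sum_\alpha d_\alpha(d_\alpha-1)=pN^2$. At the permitted value $p=\tfrac12\bigl(3(k-1)-m\bigr)$ this is roughly $N^3 m/32$, while your right-hand side $\bigl(3(k-1)-m\bigr)\binom{N}{3}$ is roughly $N^3 m/6$: concretely, the distribution with all nonzero $d_\alpha\approx\tfrac34N$ spread over about $\tfrac{8m}{9}$ positions satisfies every constraint you impose yet makes your inequality true for all $N$, so no bound on $N$ follows. (Such a configuration cannot come from an actual code, but ruling it out requires information your argument has already thrown away.)

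The repair is to count trifference positions directly, as the paper does. With $d_\alpha,g_\alpha,h_\alpha$ the numbers of codewords carrying $2,0,1$ at position $\alpha$, the $m$-trifference property gives
\[
m\binom{N}{3}\;\le\;\sum_{\alpha} d_\alpha g_\alpha h_\alpha\;\le\;\sum_{\alpha} d_\alpha\frac{(N-d_\alpha)^2}{4}\;\le\;\frac{N^2}{4}\sum_\alpha d_\alpha\;=\;\frac{(k-1)N^3}{4},
\]
and here the extremal distribution is the spread-out one, so no concentration control is needed at all. Solving $(N-1)(N-2)\le\frac{3(k-1)}{2m}N^2$ with $k-1=\lceil 2m/3\rceil-1$ gives $N\le 6m$ (in fact $6m,3m,2m$ according to $m\bmod 3$), which is the paper's route; your pairwise bound $|A_x\cap A_y|\le\tfrac12\bigl(3(k-1)-m\bigr)$ is correct but ends up being unnecessary.
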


\begin{proof}[Proof of Theorem \ref{upper} and \ref{upper_mu}] In the setup of Lemma \ref{S-set}, suppose that $S$ consists of the codewords with exactly $k<\frac13n$ many $2$'s, where $k$ is an integer to be fixed later. Let $C=\{c_1, c_2, ..., c_T\}\subseteq S$ be an $m$-trifferent code.
    To bound $|C|=T$, we double count the number of quadruples $$K=|\{(x,y,z,i) : 1\le x<y<z\le T, ~i\in [n], \text{~$c_x$, $c_y$ and $c_z$ triffer at position $i$}\}|$$ along the lines of the proof of Theorem \ref{largem}. Then $K\ge m\binom{T}{3}$. Let $r_i, s_i, t_i$ denote the number of codewords in $C$ having $i$-th coordinate $0$, $1$, and $2$, respectively. Then $\sum_{i=1}^n r_is_it_i=K$ and $\sum_{i=1}^n t_i=kT$. Note that 
  \begin{equation}\label{eq:doublecount}
        m\binom{T}{3}\le\sum_{i=1}^n r_is_it_i \le \sum_{i=1}^n t_i\left(\frac{T-t_i}{2}\right)^2 \le n\frac{kT}{n} \left(\frac{T-\frac{kT}{n}}{2}\right)^2
        \le  \frac{kT^3}{4}.      
  \end{equation}

Here, the use of Jensen's inequality in the step $\sum_{i=1}^n t_i\left(\frac{T-t_i}{2}\right)^2 \le n\frac{kT}{n} \left(\frac{T-\frac{kT}{n}}{2}\right)^2$ is justified only in the case when $t_i\leq \frac23T$ for every $i$, as the function $x(T-x)^2$ is concave in $[0, \frac23T]$. However, the maximum value of the left hand side over $(t_i)\in \mathbb{Z}\cap [0, T]$ cannot be taken  when there exists an index $i$ for which $t_i>\frac23T$. Indeed, the sum $\sum_{i=1}^n t_i=kT$ implies that there exists an index $j$ with $t_j<\frac13T$ in view of the assumption on $k$. But then altering $t_i\rightarrow t_i-1$, $t_j\rightarrow t_j+1$ for this index pair, we would obtain an increment in the sum, as  $$(t_i-1)\left(\frac{T-t_i+1}{2}\right)^2+(t_j+1)\left(\frac{T-t_j-1}{2}\right)^2 - t_i\left(\frac{T-t_i}{2}\right)^2 - t_j\left(\frac{T-t_j}{2}\right)^2=$$ $$\frac14 (4 T - 3 t_i - 3 t_j) (-1 + t_i - t_j)>0,$$ since both factors $(4 T - 3 t_i - 3 t_j)=((3 T - 3 t_i)+(T - 3 t_j))$ and $ (-1 + t_i - t_j)$ are positive.

Now let us    simplify the inequality \eqref{eq:doublecount}:   $m\binom{T}{3}\le  \frac{kT^3}{4}$. By expanding the formula, we obtain \begin{equation}\label{eq:km}
    (2m-3k)T^2-6mT+4m\le 0.        
    \end{equation}
 Plugging in $k=\lceil 2m/3\rceil -1$ gives us the bound \[
    T\le \begin{cases}
6m &\text{if $m \equiv 2\pmod 3 $}\\
3m &\text{if $m \equiv 1\pmod 3$}\\
2m &\text{if $m \equiv 0\pmod 3$.}
\end{cases} 
    \] Hence, if $m$ is of form $m=3\ell+r$ for $r\in \{0,1,2\}$, then Lemma \ref{S-set} implies \begin{equation}\label{eq:up}
    T(n,m)\le 3^n\cdot \frac{T_S(n,m)}{\binom{n}{k}\cdot 2^{n-k}}\le \begin{cases}
\left(\frac{3}{2}\right)^n\frac{6(3\ell+2)2^{2\ell+1}}{\binom{n}{2\ell+1}} &\text{if $r =2 $}\\[10pt]
\left(\frac{3}{2}\right)^n\frac{3(3\ell+1)2^{2\ell}}{\binom{n}{2\ell}} &\text{if $r=1$}\\[10pt]
\left(\frac{3}{2}\right)^n\frac{6\ell2^{2\ell-1}}{\binom{n}{2\ell-1}} &\text{if $r=0$,}\end{cases}    
    \end{equation}
as claimed in Theorem \ref{mainupper} and its more compact form Theorem \ref{upper}. Note that the same argument can be slightly refined, as Inequality \eqref{eq:km} implies a slightly better upper bound on $T$. Indeed, for small values of $m$, we get the exact  bounds presented below. 
\begin{center}
\begin{tabular}{c|ccccc}
$m$ & $2$ & $3$ & $4$ & $5$ & $6$\\\hline
$k$ & $1$ & $1$ & $2$ & $3$ & $3$\\\hline
upper bound on $T$ & $2(3+\sqrt{7})$ & $3+\sqrt{5}$ & $2(3+\sqrt{7})$ & $15+\sqrt{205}$ & $2(3+\sqrt{7})$\\\hline
approx. upper bound on $T$ & $\approx11.29$ & $\approx 5.24$ & $\approx11.29$ & $\approx 29.32$ & $\approx11.29$
\end{tabular}
\end{center}
    These bounds on $T$ together with Inequality \eqref{eq:up}, and the fact that $T$ is an integer, give the upper bounds of Theorem \ref{m_small}.\\

    Now we refine the argument to derive Theorem \ref{upper_mu}. Suppose now that $m=\left\lceil \lambda n\right\rceil$ for some $0<\lambda<\frac29$. Let $\hat{x}$ be the smallest real root of $\lambda =\frac32x(1-x)^2$. From Inequality \eqref{eq:doublecount}, we then get $$m\binom{T}{3}\le {kT} \left(\frac{T-\frac{kT}{n}}{2}\right)^2=\frac{T^3}{4}k\left(1-\frac{k}{n}\right)^2.$$
    Take $k= \mu n:= \lceil \hat{x}n\rceil-1.$ Then we get 

   $$\lambda(T-1)(T-2)\leq \frac{3T^2}{2}\mu\left(1-\mu\right)^2,$$
   which is equivalent to
   $$\left(\lambda-\frac{3}{2}\mu(1-\mu)^2\right)T^2\leq \lambda(3T-2).$$
   Consequently, $T< \frac{3\lambda}{\lambda-\frac{3}{2}\mu(1-\mu)^2}$, provided that $\lambda-\frac{3}{2}\mu(1-\mu)^2$ is positive, which holds in view of the choice of $\mu$.
   Then, similarly to the previous case, we obtain that 
     \[
    T(n,m)\le 3^n\cdot \frac{T_S(n,m)}{\binom{n}{k}\cdot 2^{n-k}}\le\left(\frac{3}{2}\right)^n\frac{C}{2^{ -\hat{x} n }\binom{n}{ \mu n }}, 
    \] with a suitable constant $C=C(\lambda)$.\\ Here, note that $\hat{x}/\lambda =\hat{x}(\lambda)\lambda^{-1}$ is monotone increasing in $\lambda\in (0, 2/9)$ for fixed $n$, with $\hat{x}/\lambda\geq  2/3$ on the whole interval while $\hat{x} n/m\to\frac 32$ as $\lambda \to 2/9$ and $n\to \infty$. On the other hand, $\mu$ is a good approximation of $\hat{x} $ as $|\mu-\hat{x}|\le n^{-1}$.
\end{proof}

\begin{figure}[H]
\centering
\includegraphics*[]{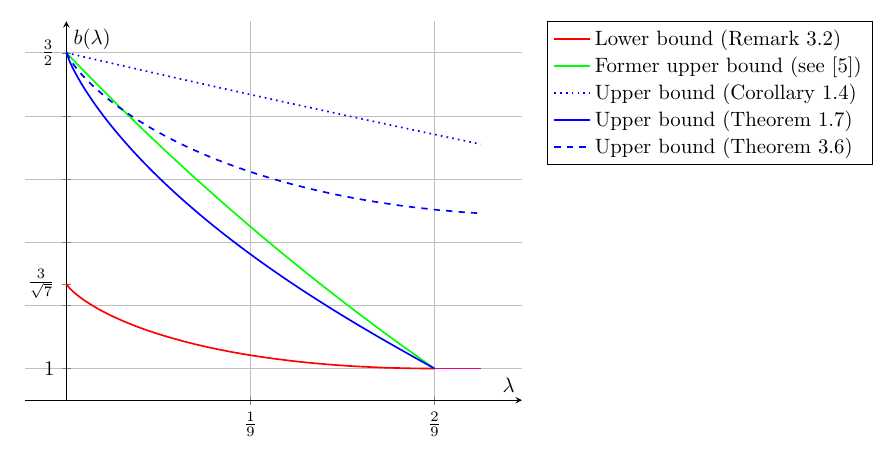}
\caption{Graph indicating various bounds of the form $(b(\lambda)(1\pm o(1))^n$ on $T(n,\lambda n)$}
\label{fig:expbounds_withgraph}
\end{figure}

\begin{rem}
In Figure \ref{fig:expbounds_withgraph}, we compare the lower bound obtained from the probabilistic construction in Remark \ref{rem:probconstr_r}, with the former upper bound and three new upper bounds from Corollary \ref{cor:triv_upper} and Theorems \ref{upper_mu} and \ref{aux_graph}, for a fixed $0\le \lambda\le\frac29$. For each $\lambda$, the values $b(\lambda)$ are plotted where $T(n,\left\lfloor\lambda n\right\rfloor)\ge (b(\lambda)(1-o(1))^n$ for lower bounds and $T(n,\left\lceil\lambda n\right\rceil)\le (b(\lambda)(1+o(1))^n$ for upper bounds. Because of Theorem \ref{largem} we know that $b(\lambda)=1$ is the correct value for $\frac29<\lambda\le 1$.
\end{rem}

\section{The linear version}\label{sec:linearsec}

The equivalence between linear trifferent codes and minimal codes was first showed in \cite{Bishnoi_trifference2024}. Here, we establish an analogous equivalence between linear $m$-trifferent codes and $m$-minimal codes, which allows us to leverage the rich theory of minimal codes to study linear trifferent codes.

\begin{lem}
A linear code $C$ in $\mathbb{F}_3^n$ is $m$-trifferent if and only if it is $m$-minimal.  
\end{lem}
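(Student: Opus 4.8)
The plan is to reduce both properties to a common pair of numerical conditions on the code, using a formula for the number of trifference positions of a triple that is special to $\mathbb{F}_3$.

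First I would exploit the translation invariance of the trifference relation noted after Lemma \ref{S-set}: a triple $\{x,y,z\}$ of distinct codewords triffers in exactly the same positions as $\{0,\,y-x,\,z-x\}$, so $C$ is $m$-trifferent if and only if for every pair of distinct nonzero codewords $a\neq b$ the triple $\{0,a,b\}$ triffers in at least $m$ positions. The key computation is then to identify these positions. Over $\mathbb{F}_3$ the two nonzero elements are negatives of each other, so $\{0,a_i,b_i\}=\{0,1,2\}$ holds exactly when $a_i\neq 0$ and $b_i=-a_i$, i.e.\ when $a_i\neq 0$ and $(a+b)_i=0$. Hence the set of trifference positions of $\{0,a,b\}$ is precisely $\sigma(a)\setminus\sigma(a+b)$, so the triple triffers $|\sigma(a)\setminus\sigma(a+b)|$ times.

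Next I would re-parametrize. Writing $c=a$ and $c'=a+b$, as $(a,b)$ runs over distinct nonzero pairs the pair $(c,c')$ runs over all pairs with $c\neq 0$, $c'\neq c$ and $c'\neq -c$. Splitting on whether $c'=0$: the case $c'=0$ (i.e.\ $b=-a$) contributes the condition $|\sigma(c)|\geq m$, that is, every nonzero codeword has weight at least $m$; and the case $c'\neq 0$ forces $c'\notin\langle c\rangle$, i.e.\ $c,c'$ linearly independent, and contributes $|\sigma(c)\setminus\sigma(c')|\geq m$. Thus $C$ is $m$-trifferent if and only if (I) $C$ has minimum distance at least $m$, and (II) $|\sigma(c)\setminus\sigma(c')|\geq m$ for every linearly independent pair $c,c'\in C$. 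It then remains to check that $m$-minimality is equivalent to exactly (I) and (II). Deleting a set $D$ of $m-1$ coordinates collapses two codewords to the same vector iff their difference is supported inside $D$; ranging over all such $D$, the clause ``codewords remain distinct after any deletion'' is equivalent to every nonzero codeword having weight exceeding $m-1$, i.e.\ to (I). Assuming (I), any deletion of $m-1$ coordinates acts injectively on $C$ and preserves linear independence, and the punctured code fails to be minimal for some $D$ iff there are linearly independent $c,c'$ with $\sigma(c)\setminus\sigma(c')$ contained in some $(m-1)$-set, i.e.\ with $|\sigma(c)\setminus\sigma(c')|\leq m-1$; requiring minimality for all $D$ is therefore exactly (II). Matching the two characterizations finishes the proof.

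The step I expect to need the most care is this second half: correctly handling the quantifier over all deleted sets $D$ (turning ``there exist linearly independent $c,c'$ with small $\sigma(c)\setminus\sigma(c')$'' into a covering condition), and verifying that under (I) linear independence is preserved under puncturing, so that a support containment in the punctured code genuinely witnesses non-minimality. The boundary case $c'=0$ in the re-parametrization, which is precisely what produces the minimum-distance condition (I), also needs to be singled out carefully, since it is the part of $m$-trifference corresponding to the ``codewords remain distinct'' clause rather than to minimality itself.
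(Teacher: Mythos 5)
Your proof is correct, and it takes a somewhat different route from the paper's. The paper argues both implications by contradiction using specific witness codewords: for one direction it takes $z=-x$ and shows the triple $\{x,y,-x\}$ can only triffer where $\sigma(x)\not\subseteq\sigma(y)$, and for the other it translates so that $z=0$ and considers $w=x+y$, showing $\sigma(w)$ is the union of $\sigma(x)$ and $\sigma(y)$ outside the trifference positions. You instead package the same $\mathbb{F}_3$-algebra (the roles of $-a$ and $a+b$) into a single exact formula --- the triple $\{0,a,b\}$ triffers precisely on $\sigma(a)\setminus\sigma(a+b)$ --- and then show that both $m$-trifference and $m$-minimality are equivalent to the same pair of conditions (minimum weight at least $m$; $|\sigma(c)\setminus\sigma(c')|\ge m$ for linearly independent pairs). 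Your reparametrization $(a,b)\mapsto(c,c')=(a,a+b)$ is a genuine bijection onto the stated range, and your handling of the quantifier over deleted sets (support containment in some $(m-1)$-set versus cardinality at most $m-1$) is sound, as is the observation that condition (I) makes puncturing injective and linear-independence-preserving. One thing your version buys is a cleaner treatment of the ``codewords remain distinct'' clause of $m$-minimality: it falls out as exactly the minimum-distance condition coming from the boundary case $b=-a$, whereas the paper's forward direction passes directly to a support-containment witness and leaves that clause implicit. The cost is a longer setup; the paper's direct contradictions are shorter once one accepts the two witnesses.
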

\begin{proof}
    Suppose $C$ is $m$-trifferent but not $m$-minimal. Then there exist non-zero codewords $x$ and $y$ such that after deleting some set of $m-1$ coordinates, $\sigma(x) \subsetneq  \sigma(y)$. 

    Consider the codeword $z = - x$, which must be in $C$ by linearity. Let $i$ be a coordinate (after the deletion of $m-1$ coordinates) where $x,y,z$ triffer. One of $x_i,y_i,z_i$ has to be zero. Note that $y_i=0$ implies $z_i=x_i=0$, while $x_i=0$ if and only if $z_i=0$. Therefore, the triple $\{x,y,z\}$ does not triffer at position $i$, a contradiction.

    For the backward implication, suppose $C$ is $m$-minimal but not $m$-trifferent. Let $x,y,z$ be three distinct codewords and let $S$ denote the coordinates where $x,y,z$ triffer. Suppose $|S|\le m-1$. We extend $S$ to $\widetilde{S}$, a subset of $[n]$ with exactly $m-1$ coordinates. Without loss of generality, assume $z=0$ by replacing $x,y,z$ with $x-z,y-z,0$. Consider $w=x+y$. For any coordinate $i$ outside $S$, $\{x_i,y_i\}\neq\{1,2\}$. Therefore, $\sigma(w)\setminus \widetilde{S} = (\sigma(x)\setminus \widetilde{S}) \cup (\sigma(y)\setminus \widetilde{S})$. Since $x$ and $y$ remain distinct and nonzero after deleting the coordinates in $\widetilde{S}$ ($C$ is $m$-minimal), we have $(\sigma(x)\setminus \widetilde{S})\neq (\sigma(y)\setminus \widetilde{S})$, which implies that either $(\sigma(x)\setminus \widetilde{S})\subsetneq (\sigma(w)\setminus \widetilde{S})$ or $(\sigma(y)\setminus \widetilde{S})\subsetneq (\sigma(w)\setminus \widetilde{S})$, contradicting $m$-minimality.
\end{proof}

\subsection{The upper bound}

Let $\PG(k-1,q)$ denote the projective space $(\F_q^{k}\setminus\{0\})/\sim$, where $u\sim v$ if $u=\lambda v$.

\begin{mydef}
Let $m\ge 1$ be an integer. A set of points $S$ on a hyperplane $H$ is a spanning set \emph{with strength $m$}, if $S$ remains a spanning set after removing any $m-1$ of its points.
\end{mydef}

As introduced in \cite{davydov2011linear}, a \textit{strong blocking set} $B\subseteq \PG(k-1,q)$ is a set of points that intersects every hyperplane $H$ in a spanning set. See also \cite{heger2021short,alon2024strong,Bishnoi_trifference2024}.

\begin{mydef}
Let $m\ge 1$ be an integer. A set of points $B$ in $\PG(k-1,q)$ is a strong blocking set \emph{with strength $m$} if $B$ remains a strong blocking set after removing any $m-1$ of its points.
\end{mydef}

It has been shown in \cite{tang2021full} that $C$ is a minimal code of dimension $k$ in $\mathbb{F}_q^n$ with a generator matrix having no two linearly dependent columns if and only if the columns of its generator matrix form a strong blocking set in $\mathrm{PG}(k-1,q)$. We now provide a generalization of this statement.

\begin{lem}
    $C$ is an $m$-minimal code of dimension $k$ in $\mathbb{F}_q^n$ with a generator matrix having no two linearly dependent columns if and only if the columns of its generator matrix form a strong blocking set $S$ with strength $m$ in $\mathrm{PG}(k - 1, q)$.
\end{lem}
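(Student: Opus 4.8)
The plan is to reduce everything to the $m=1$ case, namely the theorem of \cite{tang2021full} identifying minimal codes with strong blocking sets, by exploiting that both ``$m$-minimal'' and ``strength $m$'' are simply the robustness of the corresponding $m=1$ notion under deletion of any $m-1$ coordinates/points, and that deleting $m-1$ coordinates of $C$ is literally the same operation as deleting the $m-1$ associated columns of the generator matrix. First I would fix the dictionary: write $g_1,\dots,g_n \in \F_q^k\setminus\{0\}$ for the columns of the generator matrix $G$ and $B=\{[g_1],\dots,[g_n]\}\subseteq \PG(k-1,q)$ for the (distinct, by hypothesis) points they determine. Every codeword is $c^{(u)}=u^\top G$ for some $u\in\F_q^k$, with support $\sigma(c^{(u)})=\{i:\langle u,g_i\rangle\neq 0\}$, so the zero coordinates of $c^{(u)}$ are exactly the columns lying on the hyperplane $H_u=\{[x]:\langle u,x\rangle=0\}$. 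Deleting a coordinate set $D$ with $|D|=m-1$ produces the punctured code $C_D$ generated by $\{g_i:i\notin D\}$, i.e.\ by the point set $B\setminus P$ where $P=\{[g_i]:i\in D\}$ has $m-1$ points.

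Two facts drive the argument. First, for a linear code ``all codewords remain distinct after deleting $D$'' is equivalent to ``$C_D$ still has dimension $k$,'' equivalently ``$\{g_i:i\notin D\}$ spans $\F_q^k$,'' equivalently ``$B\setminus P$ spans all of $\PG(k-1,q)$.'' Second, every strong blocking set spans the whole space: if $B\setminus P$ lay inside a hyperplane $H_0$, then for any other hyperplane $H$ the intersection $(B\setminus P)\cap H\subseteq H_0\cap H$ would have projective dimension at most $k-3$ and hence could not span the $(k-2)$-dimensional $H$, contradicting the blocking property. Consequently the ``codewords remain distinct'' clause in the definition of $m$-minimality is automatically subsumed by the strong blocking property and need not be tracked separately on the geometric side.

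With these in place both implications follow by applying \cite{tang2021full} to each punctured code. For the forward direction I would assume $C$ is $m$-minimal and fix any $P$ with $|P|=m-1$ and its associated $D$. Then $C_D$ has distinct codewords (hence dimension $k$) and is minimal, and its columns $B\setminus P$ are still pairwise linearly independent; applying the $m=1$ theorem to $C_D$ shows $B\setminus P$ is a strong blocking set, and since $P$ was arbitrary, $B$ has strength $m$. For the converse I would assume $B$ has strength $m$ and fix $D$ with $|D|=m-1$; then $B\setminus P$ is a strong blocking set, so by the second fact it spans $\PG(k-1,q)$, giving $C_D$ dimension $k$ and distinct codewords, whereupon the $m=1$ theorem in the reverse direction yields that $C_D$ is minimal. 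Thus $C$ is $m$-minimal.

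The only point requiring care — more a bookkeeping subtlety than a genuine obstacle — is the handling of dimension: the $m=1$ theorem is stated for a code of dimension exactly $k$ whose columns live in $\PG(k-1,q)$, so before invoking it on $C_D$ one must ensure the surviving columns still span $\F_q^k$. This is precisely where the two facts above do their work, with distinctness of punctured codewords supplying full rank in the forward direction and the spanning property of strong blocking sets supplying it in the converse; everything else transfers directly through the support/hyperplane dictionary.
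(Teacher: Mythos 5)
Your proof is correct and follows essentially the same route as the paper, which simply observes that deleting $m-1$ coordinates of $C$ corresponds to removing the $m-1$ associated points of $B$ and then invokes the $m=1$ equivalence of \cite{tang2021full} on each punctured code. You are in fact more careful than the paper's two-sentence argument, since you explicitly verify the dimension bookkeeping (distinctness of punctured codewords $\Leftrightarrow$ full rank of the surviving columns $\Leftrightarrow$ spanning, with the latter automatic for strong blocking sets), a detail the paper leaves implicit.
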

\begin{proof}
    A code has the $m$-minimal property if and only if after every deletion of $m - 1$ columns it is a minimal code. In other words, the strong blocking set formed by the columns of the generator matrix is a strong blocking set even after removing any set of $m-1$ elements. 
\end{proof}

In \cite{Bishnoi_trifference2024}, it was also shown that a set of points $\mathcal{L}\subseteq \PG(k-1,q)$ is a strong blocking set if and only if $B=\cup_{\ell\in \mathcal{L}} \ell\subseteq \mathbb{F}_q^k$ is an affine $2$-blocking set, i.e., if $B$ meets every $(k-2)$-dimensional affine subspace of $\mathbb{F}_q^k$ in at least one point. Hence we have the following corollary.

\begin{cor}
\label{cor:affine_blocking}
    Let $C$ be a linear code of dimension $k$ in $\mathbb{F}_3^n$ with generator matrix $G$.
    Suppose $G$ has no two linearly dependent columns. 
    Then $C$ is an $m$-trifferent code if and only if the union of $1$-dimensional subspaces of  $\mathbb{F}_3^k$ spanned by the columns of $G$ meet every affine subspace of dimension $(k - 2)$, not passing through the origin, in at least $m$ points. \qed
\end{cor}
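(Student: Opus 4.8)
The plan is to obtain the corollary by chaining together the equivalences already proved in this section and supplying one counting observation that is specific to $q=3$. First I would record that, for a dimension-$k$ code $C \subseteq \F_3^n$ whose generator matrix $G$ has no two linearly dependent columns, the first lemma of this section gives that $C$ is $m$-trifferent if and only if $C$ is $m$-minimal, and the preceding lemma gives that this holds if and only if the set $\mathcal{L}$ of projective points spanned by the columns of $G$ is a strong blocking set with strength $m$ in $\PG(k-1,3)$. Thus everything reduces to reinterpreting \emph{strength $m$} in the affine language: writing $B = \bigcup_{\ell \in \mathcal{L}} \ell \subseteq \F_3^k$, I must show that $\mathcal{L}$ has strength $m$ if and only if $B$ meets every affine $(k-2)$-subspace $A$ with $0 \notin A$ in at least $m$ points.

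The crucial observation is that over $\F_3$ each line contributes \emph{at most one} point to such an intersection. I would verify this directly: a $1$-dimensional subspace $\ell$ has exactly two nonzero points $v$ and $-v$, and if both lay on $A = W + w$, where $W$ is the $(k-2)$-dimensional direction space and $0 \notin A$, then $2v = v - (-v) \in W$, hence $v \in W$ because $2 = -1$ is invertible in $\F_3$; but $v \in W$ together with $v \in A$ forces $w \in W$ and therefore $0 \in A$, a contradiction. Consequently the assignment sending a line $\ell \in \mathcal{L}$ that meets $A$ to its unique point of $A$ is a bijection onto $B \cap A$, so $|B \cap A|$ equals the number of lines of $\mathcal{L}$ meeting $A$.

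With this in hand the final equivalence is a short counting argument resting on the case $m=1$, which is exactly the strong-blocking-set criterion of \cite{Bishnoi_trifference2024}: for any $\mathcal{L}' \subseteq \mathcal{L}$, the set $\mathcal{L}'$ is a strong blocking set if and only if $\bigcup_{\ell \in \mathcal{L}'} \ell$ meets every $A$ as above in at least one point. Applying this after deleting an arbitrary set of $m-1$ points, $\mathcal{L}$ has strength $m$ if and only if, for every such $A$ and every deletion of $m-1$ lines, some surviving line still meets $A$; and this inner condition holds for all $(m-1)$-subsets precisely when at least $m$ lines of $\mathcal{L}$ meet $A$. By the bijection above this says $|B \cap A| \ge m$, which is the asserted condition.

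The only genuine content, and the step I would treat most carefully, is the one-point observation, since it is what lets the strength parameter $m$ (a statement about deleting projective points, equivalently lines) be read off cleanly as a lower bound on the number of affine points of $B$ in each intersection. It is precisely here that $q=3$ enters essentially: over a larger field a line can meet $A$ in several points, so the count of surviving lines and the count of surviving points of $B$ would no longer coincide, and the clean ``at least $m$ points'' formulation would break down.
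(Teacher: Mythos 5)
Your proof is correct and follows essentially the same route as the paper, which states this corollary without proof as an immediate consequence of the two preceding lemmas and the strong-blocking-set/affine-blocking-set equivalence from \cite{Bishnoi_trifference2024}. Your explicit verification that over $\F_3$ each $1$-dimensional subspace meets an affine $(k-2)$-subspace avoiding the origin in at most one point (and that distinct subspaces give distinct points, since they share only the origin) is exactly the counting step the paper leaves implicit, and it correctly isolates where $q=3$ is used.
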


Let $T_L(n, m)$ be the largest size of a linear $m$-trifferent code of length $n$.
\begin{thm}\label{thm:linearUpperBound}
For some absolute constant $c>0$, $T_L(n, m) \leq c\cdot 3^{\frac{n - m }{4.55}}$.
\end{thm}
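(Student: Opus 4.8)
The plan is to establish an upper bound on the length $n$ of a linear $m$-trifferent code of a fixed dimension $k$, and then optimize over $k$. By the lemma relating linear $m$-trifferent codes to $m$-minimal codes, and the subsequent characterization, a linear $m$-trifferent code of dimension $k$ in $\mathbb{F}_3^n$ (assuming, without loss of generality, that its generator matrix $G$ has no two linearly dependent columns) corresponds to a strong blocking set with strength $m$ in $\mathrm{PG}(k-1,3)$, whose size equals the number of columns, i.e. $n$. First I would extract the combinatorial consequence of the strength-$m$ condition: every hyperplane $H$ of $\mathrm{PG}(k-1,3)$ must meet the blocking set in a set that \emph{spans} $H$ even after deleting any $m-1$ points. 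In particular, $H$ must contain at least $(k-1) + (m-1)$ points of the blocking set, since a spanning set of a $(k-2)$-dimensional projective space has at least $k-1$ points and we need $m-1$ spare points on top of that. This gives a lower bound on the number of points of the code's column set lying on each hyperplane.

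The key step is then a counting argument. I would count incidences between the $n$ points (columns, viewed projectively) and the hyperplanes of $\mathrm{PG}(k-1,3)$, or alternatively count the total number of points forced to lie on hyperplanes. Each of the $\frac{3^k-1}{3-1}$ hyperplanes carries at least $k+m-2$ of our points; on the other hand each point lies on a fixed number of hyperplanes (namely $\frac{3^{k-1}-1}{3-1}$). Double counting the incident pairs yields an inequality of the form
\begin{equation*}
n \cdot \frac{3^{k-1}-1}{2} \geq \frac{3^k-1}{2} \cdot (k+m-2),
\end{equation*}
which after simplification gives $n \gtrsim 3(k+m-2)$ — but this only controls $n$ linearly in $k$ and $m$ and is far too weak to produce the exponential bound $3^{(n-m)/4.55}$. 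So the honest plan is different: the bound $T_L(n,m) \le c\,3^{(n-m)/4.55}$ should come from pushing $k$ up against a \emph{size} bound for strong blocking sets. The constant $4.55$ strongly suggests invoking a known bound on the minimum size of strong blocking sets in $\mathrm{PG}(k-1,3)$, namely one of the form (minimum size) $\geq c\, k \cdot q / \log q$-type results, or more precisely the linear-in-$k$ lower bounds from the literature (Alon--Bishnoi--Das--Ferber or similar). The number $4.55 \approx \log_3 3^{4.55}$ should be the reciprocal of the best known density constant.

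Concretely, the plan is: let $k$ be the dimension of the code. The strength-$m$ strong blocking set of size $n$ in $\mathrm{PG}(k-1,3)$ contains, after deleting any $m-1$ points, an ordinary strong blocking set; hence $n \geq m - 1 + (\text{minimum size of a strong blocking set in } \mathrm{PG}(k-1,3))$. Invoking the known lower bound that a strong blocking set in $\mathrm{PG}(k-1,3)$ has at least $c' k$ points (with the appropriate constant making $4.55$ appear), I get $n \geq m-1+ c' k$, i.e. $k \leq (n-m+1)/c'$. Since $T_L(n,m) \leq 3^k$ (the code has $3^k$ codewords), substituting the bound on $k$ gives $T_L(n,m) \leq 3^{(n-m+1)/c'} \leq c\,3^{(n-m)/4.55}$ for a suitable absolute $c$, where $c' = 4.55$ is dictated by the density constant in the strong-blocking-set size bound. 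The main obstacle will be pinning down exactly which size lower bound for strong blocking sets over $\mathbb{F}_3$ yields the constant $4.55$, and carefully handling the reduction when two columns of $G$ are linearly dependent (which must be ruled out or absorbed without changing the dimension-size relationship); the rest is bookkeeping.
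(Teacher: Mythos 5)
Your final plan is exactly the paper's proof: reduce to a strength-$m$ strong blocking set of size $n$ in $\mathrm{PG}(k-1,3)$, note that deleting any $m-1$ points leaves an ordinary strong blocking set, invoke the lower bound of $4.55(k-1)$ on the size of strong blocking sets in $\mathrm{PG}(k-1,3)$ from the linear trifference literature to get $n \geq 4.55(k-1)+m-1$, and conclude $T_L(n,m)=3^k \leq c\cdot 3^{(n-m)/4.55}$. The incidence-counting detour you correctly discard is not needed, and your identification of the constant $4.55$ as the strong-blocking-set density bound is exactly right.
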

\begin{proof}
Let $k = \log_3 T_L(n, m)$ be the dimension of the code. 
Then we have a set of $n$ points in $\mathrm{PG}(k - 1, 3)$ such that every $n - m + 1$ element subset of it is a strong blocking set. 
Since a strong blocking set in $\mathrm{PG}(k - 1, 3)$ has size at least $4.55 (k - 1)$ (see \cite{Bishnoi_trifference2024}), we get that
$n \geq 4.55(k - 1) + m - 1$, giving the statement. 
\end{proof}

\subsection{The lower bound}

From the analysis above, to lower bound the rate of $T_L(n,m)$, we only need to upper bound the size of $S$. We use a probabilistic approach to construct a strong blocking set $S$ with strength $m$ in the spirit of the bound of Héger and Nagy on the length of minimal codes \cite{heger2021short}.

\begin{thm}\label{strength}
    In $\PG(k-1,3)$, there exists a strong blocking set with strength $m$ of size at most $43k+18m-93$. In particular, when $k$ is sufficiently large and $m=o(k)$, the bound can be improved to $10k+5m-21$.
\end{thm}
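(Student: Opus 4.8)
The plan is to translate the strength-$m$ requirement into a clean geometric covering condition and then run a single union-bound argument over a uniformly random point set, in the spirit of the Héger--Nagy construction for minimal codes \cite{heger2021short}.

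First I would reformulate the target. A set $S$ is a strong blocking set with strength $m$ precisely when, after deleting any $m-1$ of its points, it still meets every hyperplane in a spanning set. Unwinding this, $S$ has strength $m$ if and only if for every flag consisting of a $(k-3)$-flat $\Pi$ (a codimension-$2$ subspace) contained in a hyperplane $H$, at least $m$ points of $S$ lie in $H \setminus \Pi$. Indeed, if some such flag had at most $m-1$ points of $S$ in $H \setminus \Pi$, deleting exactly those would leave $S \cap H$ inside the proper subspace $\Pi$ of $H$, so it would fail to span $H$; conversely, if every flag has $\ge m$ points outside $\Pi$, then after any $m-1$ deletions no $S \cap H$ can be squeezed into a hyperplane of $H$, so it still spans. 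This ``every flag region is $m$-fold covered'' statement is what I would aim to satisfy.

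Then I would estimate the two quantities feeding the union bound. There are $(q+1)\gauss{k}{2}_q = O(q^{2k})$ flags, since there are $\gauss{k}{2}_q$ codimension-$2$ flats and $q+1$ hyperplanes through each. For a single uniformly random point $P \in \PG(k-1,q)$,
\[
p := \Pr[P \in H \setminus \Pi] = \frac{q^{k-2}(q-1)}{q^k-1} \longrightarrow \frac{q-1}{q^2},
\]
which for $q=3$ tends to $\tfrac29$ (so $1-p \to \tfrac79$), a reappearance of the $\tfrac29$ threshold. Sampling $N$ points without replacement (so collisions are not an issue), the number landing in a fixed $H \setminus \Pi$ is hypergeometric with mean $\approx pN$; since sampling without replacement is at least as concentrated as sampling with replacement, the bad event ``fewer than $m$ such points'' has probability at most the binomial tail $\Pr[\Bin(N,p) \le m-1] \le 2^{-N\,H\!\left(\frac{m-1}{N},\,p\right)}$, using the relative-entropy bound already in play above. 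Requiring $(q+1)\gauss{k}{2}_q \cdot \Pr[\Bin(N,p)\le m-1] < 1$ and taking logarithms demands roughly
\[
N\log\tfrac{1}{1-p} \ \gtrsim\ 2k\log q + (m-1)\log N + O(1).
\]
The leading coefficient of $k$ is $\tfrac{2\log 3}{\log(9/7)} \approx 8.75 < 10$, so positivity of the surviving probability produces a set of size $\le N$ with the stated bound.

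The hard part, I expect, is the bookkeeping in this last step: balancing the exact flag count (Gaussian binomials, not merely their $q^{2k-4}$ approximation) against a binomial tail sharp enough to pin the constant below $10$, while simultaneously isolating the genuinely linear $+5m$ contribution from the spurious $\binom{N}{m-1}$-type polynomial factors. This is exactly where the hypothesis $m=o(k)$ is needed, since it renders the $(m-1)\log N$ term negligible against the $k$-term and lets the tail correction collapse to the clean $+5m$; the improved constants $10$ and $5$ require the sharp leading-order estimate, whereas replacing it by a more robust but lossier Chernoff bound valid in all regimes degrades them to the universal $43k+18m-93$.
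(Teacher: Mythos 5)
Your reformulation of strength $m$ as ``every flag $(\Pi,H)$ with $\Pi$ a codimension-$2$ subspace of the hyperplane $H$ sees at least $m$ points of $S$ in $H\setminus\Pi$'' is correct, and your union-bound-plus-Chernoff scheme is sound; but your randomness is genuinely different from the paper's. The paper samples $t$ uniformly random \emph{lines} and takes $\B=\bigcup_i\ell_i$, unioning only over codimension-$2$ subspaces $H'$ (not flags, since every line meets every hyperplane) and using that a random line meets a fixed $H'$ with probability $<\tfrac{11}{27}$; the bad event is that all but at most $m-1$ lines meet $H'$. You sample $N$ uniformly random \emph{points}, which is arguably simpler and, in the $m=o(k)$ regime, more efficient: your leading constant $\tfrac{2\log 3}{\log(9/7)}\approx 8.75$ beats the paper's $4\cdot\tfrac{2\log 3}{\log(27/11)}\approx 9.79$ (the factor $4$ being the points per line), so the $10k+5m-21$ claim follows comfortably. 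One concrete warning for the general regime: the paper obtains $43k+18m-93$ by the crude relaxation $tH\!\left(\tfrac{m-1}{t}\right)\le t$, which works there because $1+\log\tfrac{11}{27}<0$; the analogous relaxation in your setting fails, since $1+\log\tfrac79>0$, so the term $(N-m+1)\log\tfrac79+N$ is always positive. You must therefore keep the two-parameter bound $\PP\left[\Bin(N,\tfrac29)\le m-1\right]\le 2^{-NH\left(\frac{m-1}{N},\frac29\right)}$ throughout and check that $N=43k+18m-93$ forces $\tfrac{m-1}{N}\le\tfrac1{18}$, whence $H\!\left(\tfrac{m-1}{N},\tfrac29\right)\ge H\!\left(\tfrac1{18},\tfrac29\right)\approx 0.153$ and the union bound closes; this works, but it is a real calculation you have only gestured at, and it is exactly the place where your route and the paper's diverge in difficulty.
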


\begin{proof}
We uniformly choose $t$ lines $\ell_1, \ell_2, \ldots \ell_{t}$ from $\PG(k-1,3)$. Consider the probability that $\B= \bigcup_{i=1}^{t} \ell_i$ fails to be a strong blocking set with strength $m$ in $\PG(k-1, 3)$. This happens if and only if there exists a hyperplane $H$ such that, after the removal of some $m-1$ points from $\B$ to obtain a modified set $\B^*$, the span of $H \cap \B^*$ is strictly contained in $H$, i.e., $\dim\langle H\cap\B^*\rangle<k-2$. In particular, there exists a $(k-3)$ dimensional projective subspace $H'$ such that apart from at most $m-1$ lines, $H'\cap \ell_i$ is non-empty. This happens with probability at most \[
    {k\brack k-2}_3\cdot \PP(\ell_1\cap H'\neq \emptyset)^{t-m+1}\cdot \binom{t}{m-1},
    \] which gives an upper bound on the probability that $\B$ fails to be a strong blocking set.

    Note that for $H'$ with $\dim H'=k-3$, \begin{align*}
    \PP(\ell_1\cap H'\neq \emptyset)&=\frac{{k-2 \brack 2}_3+\frac{1}{3}\cdot {k-2 \brack 1}_3\left({k\brack 1}_3-{k-2\brack 1}_3\right)}{{k\brack 2}_3}\\
    &=\frac{\frac{(3^{k-2}-1)(3^{k-3}-1)}{16}+\frac{1}{3}\cdot \frac{3^{k-2}-1}{2}\left(\frac{3^k-1}{2}-\frac{3^{k-2}-1}{2}\right)}{\frac{(3^{k}-1)(3^{k-1}-1)}{16}}\\
    &=\frac{(3^{k-2}-1)(3^{k-3}-1)+32\cdot 3^{2k-5}-32\cdot 3^{k-3}}{(3^k-1)(3^{k-1}-1)}\\
    &=\frac{11\cdot 3^{2k-4}-4\cdot 3^{k-1}+1}{27\cdot 3^{2k-4}-4\cdot 3^{k-1}+1}\\
    &<\frac{11}{27}.
    \end{align*} Also we can upper bound ${k\brack k-2}_3$ by $3^{2k-1}/16$. Hence, $\B$ is a blocking set with strength $m$ with positive probability if \[
    \frac{3^{2k-1}}{16}\cdot \left(\frac{11}{27}\right)^{t-m+1}\cdot \binom{t}{m-1}<1.
    \] Taking logarithms and relaxing, above inequality holds if \begin{equation}\label{eq:probInequality}
    (2k-1)\log 3-4+(t-m+1)\log \left(\frac{11}{27} \right) +t\cdot H\left(\frac{m-1}{t}\right)<0.
    \end{equation}

   A further relaxation of the entropy term yields the simpler sufficient condition 
\begin{equation}\label{eq:probInequality}
    (2k-1)\log 3-4 + (t-m+1)\log\left(\frac{11}{27}\right)+t<0.
    \end{equation} Rearranging this inequality gives \begin{equation}\label{eq:linearGrowth}
        t>-\frac{2\log 3}{\log\left(\frac{22}{27}\right)}k+\frac{\log\left(\frac{11}{27}\right)}{\log\left(\frac{22}{27}\right)}m+\frac{\log 3+4-\log\left(\frac{11}{27}\right)}{\log\left(\frac{22}{27}\right)}.
    \end{equation} Hence, one may choose \[
    t=\lceil 10.729k+4.385m-23.287\rceil.
    \] Since each line in $\PG(k-1,3)$ contains $4$ points, \[
    |\B|\le 4t \le 43k+18m-93.
    \]

    We now assume $k$ is sufficiently large and $m=o(k)$. From equation~\eqref{eq:linearGrowth} we see that $t$ grows at least linearly with $k$, hence $m=o(t)$. Consequently, the entropy term satisfies $H\left(\frac{m-1}{t}\right) \to 0$. A sufficient condition for inequality~\eqref{eq:probInequality} becomes for some $\epsilon>0$,
\[
\left(\log\left(\frac{11}{27}\right)+\epsilon\right) t<-2\log3 k+\log\left(\frac{11}{27}\right)m+\log 3+4-\log\left(\frac{11}{27}\right).
\] We may choose \[
    t=\lceil 2.447k+1.001m-5.310\rceil,
    \] which yields \[
    |\B|\le 4t \le 10k+5m-21.
    \]
\end{proof}

\begin{cor} For any integer $m<2n/9$, we have
 \[
    T_L(n,m)\ge 3^{\left\lfloor\frac{n-18m+93}{43}\right\rfloor}.
    \] Moreover, when $m=o(n)$, for sufficiently large $n$, this bound improves to \[
    T_L(n,m)\ge 3^{\left\lfloor\frac{n-5m+21}{10}\right\rfloor}.
    \]
\end{cor}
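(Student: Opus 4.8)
The plan is to read off the corollary directly from Theorem~\ref{strength}, using the equivalence established in this section between linear $m$-trifferent codes of dimension $k$ and length $n$ and strong blocking sets with strength $m$ of size $n$ in $\PG(k-1,3)$. Concretely, given a strong blocking set $B$ with strength $m$ in $\PG(k-1,3)$, I would pick a representative in $\F_3^k$ for each of its points and let these vectors be the columns of a $k\times |B|$ generator matrix $G$. Since $B$ spans, $G$ has rank $k$, and since representatives of distinct projective points are pairwise linearly independent, $G$ has no two linearly dependent columns; by the equivalence the resulting code is linear, $m$-trifferent, of length $|B|$ and dimension $k$, so it has $3^k$ codewords. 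Hence a strong blocking set with strength $m$ of size $N$ certifies $T_L(N,m)\ge 3^k$.

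Next I would turn this into a bound for a prescribed length $n$. Theorem~\ref{strength} guarantees such a blocking set of size at most $N:=43k+18m-93$, so the construction above gives a linear $m$-trifferent code of some length $|B|\le N$ and dimension $k$. To reach length exactly $n$ I would invoke monotonicity of $T_L(\cdot,m)$ in the length: appending an all-zero coordinate to every codeword preserves both the dimension and the $m$-trifference property, as the new coordinate never contributes a trifferent position, so $T_L(n,m)\ge T_L(n',m)$ whenever $n\ge n'$. Therefore $T_L(n,m)\ge 3^k$ as soon as $43k+18m-93\le n$. Solving for $k$ gives $k\le \frac{n-18m+93}{43}$, and choosing the largest admissible integer $k=\left\lfloor\frac{n-18m+93}{43}\right\rfloor$ yields the first claimed bound. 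The improved bound follows in exactly the same way from the refined estimate $N=10k+5m-21$ of Theorem~\ref{strength}; that estimate requires $k$ large and $m=o(k)$, but since here $k=\Theta(n)$, the hypothesis $m=o(n)$ is equivalent to $m=o(k)$, and solving $10k+5m-21\le n$ gives $k=\left\lfloor\frac{n-5m+21}{10}\right\rfloor$.

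I expect the genuine content to lie entirely in Theorem~\ref{strength}; the corollary is bookkeeping, and the only points needing care are the passage from ``size at most $N$'' to ``length exactly $n$'' (handled by the monotonicity observation above) and the admissibility of the chosen $k$. For the latter one checks that the floor guarantees $43k+18m-93\le n$, so that the blocking set indeed fits into length $n$, and that $\PG(k-1,3)$ is nondegenerate for the relevant $k$, which is automatic once $n$ is large since $k=\Theta(n)$. The hypothesis $m<2n/9$ simply delimits the meaningful regime, consistent with the phase transition of Theorem~\ref{main} beyond which the function is constant; outside the range where the exponent is positive the stated inequality holds trivially, since any linear code contains the zero codeword and hence $T_L(n,m)\ge 1$.
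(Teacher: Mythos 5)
Your proposal is correct and follows essentially the same route as the paper: invoke Theorem~\ref{strength} to obtain a strong blocking set with strength $m$ of size at most $43k+18m-93$ (resp.\ $10k+5m-21$), pass to a generator matrix to get a linear $m$-trifferent code of dimension $k$, pad to length exactly $n$, and choose $k$ as the stated floor. Your explicit monotonicity observation (appending zero coordinates) and the remark about the trivial case of a nonpositive exponent are just slightly more careful versions of steps the paper leaves implicit.
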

\begin{proof}

Let $C\subseteq \F_3^n$ be a linear code of dimension $k$. Its generator matrix defines a set of $n$ points in $\PG(k-1,3)$. As established earlier, $C$ is $m$-trifferent if and only if these points form a strong blocking set $\B$ with strength $m$.

By Theorem~\ref{strength}, there exists a strong blocking set $\B\subseteq \PG(k-1,3)$ with strength $m$ satisfying $|\B|\le 43k+18m-93$.
Thus, given $n\ge 43k+18m-93$, one can  select a collection of exactly a set of $n$ columns for a generator matrix that yields an $m$-trifferent code of dimension $k$. Since a linear code of dimension $k$ has $3^k$ codewords, it follows that $T_L(n,m)\ge 3^k$. Rewriting $n\ge 43k+18m-93$ as
\[
k\le \frac{n-18m+93}{43}
\]
shows that the choice
\[
k=\left\lfloor\frac{n-18m+93}{43}\right\rfloor
\]
is valid, and we deduce that
\[
T_L(n,m)\ge 3^{\lfloor \frac{n-18m+93}{43}\rfloor}.
\]

For the regime $m=o(n)$, note that the above argument shows $k$ grows linearly in $n$. Hence, we may apply the refined bound in Theorem~\ref{strength} and the same argument gives
\[
T_L(n,m)\ge 3^{\lfloor \frac{n-5m+23}{10}\rfloor}
\] for sufficiently large $n$.
\end{proof}

\section{Concluding remarks}\label{sec:concluding}

In this section, we discuss some related results to the $m$-trifference problem  and present computational results as well.

\subsection{Computational results and explicit constructions} 

Corollary~\ref{cor:affine_blocking} leads to an easy ILP formulation of the problem. Namely, 
we take a binary variable $x_i$ for the $i$-th $1$-dimensional subspace $V_i$ of $\mathbb{F}_3^k$, and for every $k - 2$ dimensional affine subspace $S$, we add the constraint $\sum_{i : V_i \leq S} x_i \geq m$.
Using Gurobi \cite{gurobi}, we have computed the following optimal values of these $m$-fold affine blocking sets.
\begin{center}
\begin{tabular}{c|ccccccc}
$(k, m)$ & $(3, 2)$ & $(4, 2)$ & $(4, 3)$ & $(4, 4)$ & $(4, 5)$ & $(4, 6)$ & $(4, 7)$\\\hline
$n$ & $12$ & $16$ & $24$ & $28$ & $30$ & $35$ & $38$
\end{tabular}
\end{center}

Via Corollary~\ref{cor:affine_blocking}, this table implies that $T_L(n, 2) = 3^3$ for all $12 \leq n \leq 15$, and $T_L(16, 2) = 3^4$.

For the non-linear case, direct computer computations give the following values: $T(4,2) = 4$, $T(5,2) = T(6,2) = 6$. Also, $T(4,3) = T(5,3) = 3$, $T(6,3) = T(7,3) = 4$.

\subsection{Explicit constructions}

Large linear trifferent codes were constructed via random methods \cite{Bishnoi_trifference2024}, so it was a relevant goal to obtain  good explicit constructions as well. Such codes were presented by Alon, Bishnoi, Das and Neri \cite{alon2024strong} using expander graphs. A similar approach can be applied in the general trifference problem as well.

For a graph $G$, let $\iota(G) = \min_{S \subseteq V(G)} \{|S| + \kappa(G - S)\}$, where
$\kappa(H)$ is the largest size of a connected component in $H$. 

Let $C$ be an infinite family of $[n, Rn, \delta n]_3$ codes, and let $G$ be an infinite family of $d$-regular graphs on $n$ vertices such that 
$\iota(G) \geq (1 - \delta)n + m - 1$. 
Then by the construction in \cite{alon2024strong} and the Lemma above, we get a linear $m$-trifferent code of length $(1 + d) n$ and dimension $k = Rn$. 

Note that constant degree expanders can be used to explicitly construct $d$-regular graphs $G$ on $n$ vertices with $\iota(G) \geq \left(1 - \frac{2 \sqrt{d}}{d + \sqrt{d} - o(1)} \right) n$, c.f. \cite{alon2024strong}.

\subsection{Relation to the sunflower conjecture}

Consider a code $C\subseteq \{0,1,2\}^n$ such that any three distinct codewords $x,y,z$ triffer in at least $1$ coordinate. Now consider the multiset
$$F(C)=\{\sigma(x): x\in C\}$$
where each $x$ is considered once. The elements of the multiset are in $\mathcal{P}([n])$.
If three distinct codewords $x,y,z$ triffer at position $i$ then $i$ appears in precisely two of the sets $\sigma(x), \sigma(y), \sigma(z)$. Observe that in $F(C)$, each word must have multiplicity at most $2$. This is because if we had $\sigma(x)=\sigma(y)=\sigma(z)$ for distinct $x,y,z$ then for every $i$, $i$ appears in 0 or 3 of the three sets, and so the three words cannot triffer at any position.

A set system $\mathcal{F}$ consisting of $k\ge 3$ sets is called a \textit{$k$-sunflower} if any two sets in the system have the same intersection, and a set system is called $k$-sunflower-free if no $k$ distinct elements of it form a $k$-sunflower.  A $3$-sunflower-free set-system is just called \textit{sunflower-free}. Naslund and Sawin \cite{naslundsawin_sunflower}  showed that a sunflower-free family $\mathcal{F}$ of subsets of $[n]$ must satisfy $|\mathcal{F}|\le \left(\frac{3}{2^{2/3}}\right)^{n(1+o(1))}$.

Consider the ground set of $F(C)$, denoting it by $\mathcal{F}$. Still in $\mathcal{F}$, it is true that there cannot be three sets $X,Y,Z$ such that for every $i\in [n]$, $i$ appears in either 0, 1 or 3 of the sets $X,Y,Z$. We can see that three sets satisfy this property precisely when they form a $3$-sunflower, therefore $\mathcal{F}$ must be a sunflower-free family, yielding $|C|\le 2|\mathcal{F}|\le \left(\frac{3}{2^{2/3}}\right)^{n(1+o(1))}$.

Here $\frac{3}{2^{2/3}}\approx 1.8899$. However, we already know that $T(n,1)\le Cn^{-2/5}\left(\frac32\right)^n$ from \cite{bhandari2025improved}, so this does not give an improvement.\\
Finally, let us mention that while this work focused on the generalization of the trifference problem, which was proven to be the most difficult one among $q$-hash codes, our methods are applicable for the $q$-hashing case as well.\\

\noindent\textbf{Acknowledgement.}
Part of the research reported in this paper was done at the 16th Emléktábla Workshop
(2024) in Vác, Hungary.

{\footnotesize
    \bibliographystyle{abbrv}
\bibliography{trifference.bib}}

\end{document}